\title{Model Consistency of \\ Partly Smooth Regularizers}
\author{
Samuel Vaiter and Gabriel Peyr\'e \\
CNRS and CEREMADE\\
Univ. Paris-Dauphine\\
{\small  \url{ {vaiter,peyre}@ceremade.dauphine.fr } } \\
\And
Jalal Fadili \\
GREYC \\ 
CNRS-ENSICAEN \\
{\small  \url{Jalal.Fadili@ensicaen.fr} }
}
\begin{document}

\maketitle

\begin{abstract}

This paper studies least-square regression penalized with partly smooth convex regularizers. This class of functions is very large and versatile allowing to promote solutions conforming to some notion of low-complexity. Indeed, they force solutions of variational problems to belong to a low-dim\-ensional manifold (the so-called model) which is stable under small perturbations of the function. This property is crucial to make the underlying low-complexity model robust to small noise. 
We show that a generalized ``irrepresentable condition'' implies stable model selection under small noise perturbations in the observations and the design matrix, when the regularization parameter is tuned proportionally to the noise level. This condition is shown to be almost a necessary condition.
We then show that this condition implies model consistency of the regularized estimator. That is, with a probability tending to one as the number of measurements increases, the regularized estimator belongs to the correct low-dimensional model manifold.
This work unifies and generalizes several previous ones, where model consistency is known to hold for sparse, group sparse, total variation and low-rank regularizations.
\OnlyLong{Lastly, we also show that this generalized ``irrepresentable condition'' implies that the forward-backward proximal splitting algorithm identifies the model after a finite number of steps. }


\end{abstract}


\section{Introduction}

\subsection{Problem Statement}

We consider the following observation model
\eq{
	y = \Phi \x_0 + w, 
}
where $\Phi \in \RR^{\P \times \N}$ is the design matrix (in statistics or machine learning) or the forward operator (in signal and imaging sciences), $\x_0 \in \RR^\N$ is the vector to recover and $w \in \RR^\P$ is the noise. The design can be either deterministic or random, and similarly for the noise $w$.

Regularization is now a central theme in many fields including statistics, machine learning and inverse problems. It allows one to impose on the set of candidate solutions some prior structure on the object $x_0$ to be estimated. We therefore consider a positive convex and finite-valued function $J$ to promote such a prior. This then leads to solving the following convex optimization problem
\eql{\label{eq-regularization-lagrangian-init}
	\umin{\x \in \RR^\N}
	\left\{ J(\x) + \frac{1}{2\la}\norm{\Phi \x - y }^2 \right\}, 
}
where $\la > 0$ controls the amount of regularization.

To simplify the notations, we introduce the following ``canonical'' parameters
\eq{
	\th = (\mu,u,\Corr) = \pa{ \frac{\la}{\P}, \frac{\Phi^* y}{\P}, \frac{\Phi^*\Phi}{\P}  }
	\in \Theta = \RR^+ \times \RR^\N \times \RR^{\N \times \N}
}
and we denote
\eq{
	\epsilon = \frac{\Phi^* w}{\P} = \u - \Corr \x_0.
}
In the following, we assume without loss of generality that $y \in \Im \Phi$ and thus $\u \in \Im(\Corr)$.

With these new parameters, the initial problem~\eqref{eq-regularization-lagrangian-init} now reads
\eql{\label{eq-regularization-lagrangian} \tag{$\Pp_\th$}
	\umin{\x \in \RR^\N}
	\left\{ 
		 \ener(\x,\th) = 
		  J(\x)  + \frac{1}{2\mu}\dotp{\Corr \x}{\x} - \frac{1}{\mu}\dotp{\x}{\u} + \frac{1}{2\mu}\dotp{\Corr^+ \u}{\u}
	\right\}.
}
where $A^+$ stands for the Moore-Penrose pseudo-inverse of a matrix $A$.

When $\mu \rightarrow 0^+$, we consider the constrained problem 
\eql{\label{eq-regularization-noiseless}\tag{$\Pp_{\th_0}$}
	\umin{\x \in \RR^\N}
	\left\{ \ener(\x, \th_0 ) = J(\x) + \iota_{\Hh_u}(\x) \right\}
	\qwhereq
	\Hh_u = \enscond{\x \in \RR^\N}{\Corr \x = u}
}
where $\th_0 = (0,\u,\Corr)$ and where the indicator function of some closed convex set $\Cc$ is $\iota_\Cc(\x)=0$ for $\x \in \Cc$ and $\iota_\Cc(\x)=+\infty$ otherwise. With these notations, $\ener$ is a function on $\RR^\N \times \Theta$. 


The goal of this paper is to asses the recovery performance of \eqref{eq-regularization-lagrangian}, i.e. to understand how close is the recovered solution of~\eqref{eq-regularization-lagrangian} to $\x_0$. We focus here on the low noise regime, i.e. when $\epsilon$ is small enough, and study not only $\ell^2$ stability, but also the identifiability of the correct low-dimensional manifold associated to $\x_0$. This unifies and extend a large body of literature, including sparsity and low-rank regularization, which turn to be a special case of the general theory of partly-smooth regularization.

\subsection{Notations}



If $\Mm \subset \RR^\N$ is a $\Cdeux$-manifold around $\x \in \RR^\N$, we denote $\tgtManif{\x}{\Mm}$ the tangent space of $\Mm$ at $\x \in \RR^\N$. We define the tangent model subspace as
\eq{
	T_{\x} = \VecHull(\partial J(\x))^\bot.
}
where the linear hull of a convex set $\Cc \subset \RR^\N$ is 
	$\VecHull(\Cc) = \enscond{ \rho (\x-\x')  }{ (\x,\x') \in \Cc^2, \rho \in \RR}$.
For a convex set $\Cc \subset \RR^\N$, $\ri(\Cc)$ is its relative interior, i.e. its interior for the topology of its affine hull (the smallest affine space containing $\Cc$). 
For a linear space $T$, we denote $P_T$ the orthogonal projection on $T$ and for a matrix $\Corr \in \RR^{\N \times \N}$, $\Corr_T = P_T \Corr P_T$.


\section{Partly-smooth Functions}

Toward the goal of studying the recovery guarantees of problem \eqref{eq-regularization-lagrangian}, our central assumption will be that $J$ is a partly smooth function. Partial smoothness of functions was originally defined~\cite{Lewis-PartlySmooth}. Our definition hereafter specializes it to the case of finite-valued convex functions.

\begin{defn} \label{dfn-partly-smooth}
	Let $J$ be a finite-valued convex function. 
	$J$ is \emph{partly smooth at $\x$ relative to a set $\Mm$} containing $\x$ if 
	\begin{enumerate}[(i)]\setlength{\itemsep}{0pt}
		\item\label{PS-C2}(Smoothness) $\Mm$ is a $\Cdeux$-manifold around $\x$ and $J$ restricted to $\Mm$ is $\Cdeux$ around $\x$.
                \item\label{PS-Sharp}(Sharpness) The tangent space $\tgtManif{\x}{\Mm}$ is $T_{\x}$.
		\item\label{PS-DiffCont}(Continuity) The set-valued mapping $\partial J$ is continuous at $\x$ relative to~$\Mm$.
	\end{enumerate}
        $J$ is said to be \emph{partly smooth relative to a set $\Mm$} if $\Mm$ is a manifold and $J$ is partly smooth at each point $\x \in \Mm$ relative to $\Mm$.
        $J$ is said to be \emph{locally partly smooth at $\x$ relative to a set $\Mm$} if $\Mm$ is a manifold and there exists a neighbourhood $U$ of $\x$ such that $J$ is partly smooth at each point $\x' \in \Mm \cap U$ relative to $\Mm$.
\end{defn}

Note that in the previous definition, $\Mm$ needs only to be defined locally around $\x$, and it can be shown to be locally unique thanks to prox-regularity of proper closed convex functions, see~\cite[Corollary~ 4.2]{HareLewis04}. 

\begin{rem}[Discussion of the properties]\label{rem:assump}
Since $J$ is convex continuous, the subdifferential of $\partial J(\x)$ is everywhere non-empty and compact and every subgradient is regular. Therefore, the Clarke regularity property~\cite[Definition~2.7(ii)]{Lewis-PartlySmooth} is automatically verified.
In view of~\cite[Proposition~2.4(i)-(iii)]{Lewis-PartlySmooth}, the sharpness property \eqref{PS-Sharp} is equivalent to ~\cite[Definition~2.7(iii)]{Lewis-PartlySmooth}.
The continuity property \eqref{PS-DiffCont} is equivalent to the fact that $\partial J$ is inner semicontinuous at $\x$ relative to $\Mm$, that is: for any sequence $\x_n$ in $\Mm$ converging to $\x$ and any $\eta \in \partial J(\x)$, there exists a sequence of subgradients $\eta_n \in \partial J(\x_n)$ converging to $\eta$. This equivalent characterization will be very useful in the proof of our main result.
\end{rem}

\subsection{Examples in Imaging and Machine Learning}
\label{sec-examples}

We describe below some popular examples of partly smooth regularizers that are routinely used in machine learning, statistics and imaging sciences. 

\paragraph{$\ell^1$ sparsity.}

One of the most popular non-quadratic convex regularization is the $\ell^1$ norm
$J(\x) = \sum_{i=1}^{\N} |\x_i|$,  
which promotes sparsity. Indeed, it is easy to check that $J$ is partly smooth at $\x$ relative to the subspace 
\eq{
	\Mm = T_{\x} = \enscond{ u \in \RR^\N }{ \supp(u) \subseteq \supp(\x) }. 
}
The use of sparse regularizations has been popularized in the signal processing literature under the name basis pursuit method~\cite{chen1999atomi} and in the statistics literature under the name Lasso~\cite{tibshirani1996regre}. 

\paragraph{$\ell^1-\ell^2$ group sparsity.}

To better capture the sparsity pattern of natural signals and images, it is useful to structure the sparsity into non-overlapping blocks/groups $\Bb$ such that $\bigcup_{b \in \Bb} b = \{1,\ldots,\N\}$. This group structure is enforced by using typically the mixed $\ell^1-\ell^2$ norm
$J(\x) = \sum_{b \in \Bb} \norm{\x_b}$, 
where $\x_b = (\x_i)_{i \in b} \in \RR^{|b|}$. We refer to~\cite{yuan2005model,bach2008consistency} and references therein for more details. Unlike the $\ell^1$ norm, and except the case $|b|=1$, the $\ell^1-\ell^2$ norm is not polyhedral, but is still partly smooth at $\x$ relative to the linear manifold defined as
\eq{
	\Mm = T_{\x} = \enscond{ \x' }{ \supp_\Bb(\x') \subseteq \supp_\Bb(\x) }
	\qwhereq
	\supp_\Bb(\x) = \bigcup \enscond{b}{ \x_{b} \neq 0 }.
}

\paragraph{Spectral functions.}

The natural spectral extension of sparsity to matrix-valued data $\x \in \RR^{\N_0 \times \N_0}$ (where $\N=\N_0^2$) is to impose a low-rank prior, which should be understood as sparsity of the singular values. Denote $\x = V_{\x} \diag(\La_{\x}) U_{\x}^*$ an SVD decomposition of $\x$, where $\La_{\x} \in \RR_+^{\N_0}$. Note that this can be extended easily to rectangular matrices. The nuclear norm is defined as $J(\x) = \norm{\x}_* = \norm{\La_{\x}}_1$. 
It has been used for instance in machine learning applications~\cite{bach2008consistency}, matrix completion~\cite{recht2010guaranteed,candesExactCompletion} and phase retrieval~\cite{CandesPhaseLift}. The nuclear norm can be shown to be partly smooth at $\xx$ relative to the manifold~\cite[Example~2]{LewisMalick08}
$\Mm = \enscond{\x'}{ \rank(\x')=\rank(\x) }$.
More generally, if $j : \RR^{\N_0} \rightarrow \RR$ is a permutation-invariant closed convex function, then one can consider the function 
$J(\x) = j(\La_{\x})$
which can be shown to be a convex function as well~\cite{LewisMathEig}. When restricted to the linear space of symmetric matrices, $j$ is partly smooth at $\La_{\x}$ for a manifold $m_{\La_{\x}}$, if and only if $J$ is partly smooth at $\x$ relative to the manifold
\eq{
	\Mm = \enscond{ U \diag(\La) U^* }{\La \in m_{\La_{\x}}, U \in \Oo_{\N_0}}, 
}
where $\Oo_{\N_0} \subset \RR^{\N_0 \times \N_0}$ is the group of orthogonal matrices. 
This result is proved in~\cite[Theorem~3.19]{Daniilidis-SpectralIdent}, extending the initial work of~\cite{DANIILIDIS-SpectralManif}. This result can be extended to non-symmetric matrices by requiring that $j$ is an absolutely permutation-invariant closed convex function, see~\cite[Theorem~5.3]{Daniilidis-SpectralIdent}. The nuclear norm $\norm{\cdot}_*$ is a special case where $j(\La)=\norm{\La}_1$.

\paragraph{Analysis regularizers.}

If $J_0 : \RR^q \rightarrow \RR$ is a convex function and $D \in \RR^{\N \times q}$ is a linear operator, one can consider the analysis regularizer 
$J(\x) = J_0(D^* \x)$. 
A popular example is when taking $J_0=\norm{\cdot}_1$ and $D^*=\nabla$ a finite difference approximation of the gradient of an image. This defines the (anisotropic) total variation, which promotes piecewise constant images, and is popular in image processing~\cite{rudin1992nonlinear}. It is also possible to define families of sparsity-enforcing prior by using $J_0=\norm{\cdot}_*$ the nuclear norm, see~\cite{Grave-TraceLasso,RichardBV13}.  If $J_0$ is partly smooth at $z=D^* \x$ for the manifold $\Mm_z^0$, then it is shown in~\cite[Theorem 4.2]{Lewis-PartlySmooth} that $J$ is partly smooth at $\x$ relative to the manifold
\eq{
	\Mm = \enscond{ \x' \in \RR^\N }{ D^* \x' \in \Mm_z^0 }.
}
\OnlyLong{Note that as $J_0$ is convex and continuous, so is $J$, and there is no need of the transversality/regularity condition in~\cite[Theorem 4.2]{Lewis-PartlySmooth}.}

\paragraph{Mixed regularization.}

Starting from a set of convex functions $\{J_\ell\}_{\ell \in \Ll}$, it is possible to design a convex function as 
$J_\ell(\x) = \sum_{\ell \in \Ll} \rho_\ell J_{\ell}(\x)$, 
where $\rho_\ell > 0$ are weights. A popular example is to impose both sparsity and low rank of a matrix, when using $J_1=\norm{\cdot}_1$ and $J_2=\norm{\cdot}_*$, see for instance~\cite{oymak2012simultaneously}. 
If each $J_\ell$ is partly smooth at $\x$ relative to a manifold $\Mm^\ell$, then it is shown in~\cite[Corollary 4.8]{Lewis-PartlySmooth} that $J$ is also partly smooth at $\x$ for $\Mm = \bigcap_{\ell \in \Ll} \Mm^\ell$.
\OnlyLong{Again, the regularity condition in~\cite[Corollary 4.8]{Lewis-PartlySmooth} is in force in our case by convexity and continuity.}


\section{Main results}

In the following, we denote $T=T_{\x_0}$, $e = P_{T}( \partial J(\x_0) ) \in \RR^{\N}$. 
Before stating our main contributions, we first introduce a central object of this paper, which controls the stability of $\Mm$ when the signal to noise ratio is large enough. 

\begin{defn}[Linearized pre-certificate] \label{defn-etaF}
For some matrix $\Corr \in \RR^{\N \times \N}$, assuming $\ker(\Corr) \cap T = \{0\}$, we define
$\etaL{\Corr} = \Corr \Corr_T^+ e$. 
\end{defn}

\subsection{Deterministic model consistency.}

We first consider the case where $\Phi$ and $w$ (or equivalently $\Corr$ and $u$) are fixed and deterministic. Our main contribution is the following theorem, which shows the robustness of the manifold $\Mm$ associated to $\x_0$ to small perturbations on both the observations and the design matrix, provided that $\mu$ is well chosen.

\begin{thm}\label{thm-stability}
	We assume that $J$ is locally partly smooth at $\x_0$ relative to $\Mm$ and that there exists $\CorrL \in \RR^{\N \times \N}$ such that 
	\eql{\label{eq-hyp-etaF}
		\ker(\CorrL) \cap T = \{0\}, 
		\qandq
		\etaL{\CorrL} \in \ri(\partial J(\x_0)).
	}
	Then, there exists a constant $C>0$ such that if 
	\eql{\label{eq-condition-closeness}
		\max\pa{
			\norm{\Corr - \CorrL}, 
	      	\norm{\epsilon} \mu^{-1}, 
	      	\mu
	     } \leq C,
	}
	the solution $\x_{\th}$ of~\eqref{eq-regularization-lagrangian} is unique and satisfies 
	\eql{\label{eq-model-consistency}
		\x_\th \in \Mm \qandq \norm{\x_\th-\x_0} = O(\norm{\epsilon}) .
	} 
\end{thm}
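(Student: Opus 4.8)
The strategy is the classical ``dual certificate on the active manifold'' argument, adapted to a general partly smooth $J$ via the inner semicontinuity of $\partial J$ recorded in Remark~\ref{rem:assump}. Write $\th_0=(0,u_0,\CorrL)$ with $u_0=\CorrL\x_0$, so that $\epsilon=0$ at $\th_0$, and let $e_\x=P_{\tgtManif{\x}{\Mm}}(\partial J(\x))$; by the sharpness property this is single-valued, and since $J|_\Mm$ is $C^2$ it is the $C^1$ Riemannian gradient of $J|_\Mm$ along $\Mm$ near $\x_0$, with $e_{\x_0}=e$.

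\textbf{Step 1 (a candidate on $\Mm$).} In a local $C^2$ parametrization of $\Mm$ around $\x_0$, consider the $C^1$ map
\eq{
	\tilde G(\x,\th)=\mu\, e_\x + P_{\tgtManif{\x}{\Mm}}\pa{\Corr\x-u} ,
}
whose zeros are exactly the critical points of $\ener(\cdot,\th)$ restricted to $\Mm$, using that $P_{\tgtManif{\x}{\Mm}}(\partial J(\x))=\{e_\x\}$. One has $\tilde G(\x_0,\th_0)=0$ because $\CorrL\x_0=u_0$, and the partial differential $\partial_\x\tilde G(\x_0,\th_0)$ restricted to $T$ equals ${\CorrL}_T$ --- the term $\mu e_\x$ drops out at $\mu=0$, while the derivative of the projector is multiplied by $\CorrL\x_0-u_0=0$ --- which is invertible on $T$ by the assumption $\ker(\CorrL)\cap T=\{0\}$. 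The implicit function theorem then produces a $C^1$ map $\th\mapsto\bar\x_\th\in\Mm$, defined for $\th$ near $\th_0$, with $\bar\x_{\th_0}=\x_0$ and $\tilde G(\bar\x_\th,\th)=0$. Writing $\Corr\bar\x_\th-u=\Corr(\bar\x_\th-\x_0)-\epsilon$, a first-order expansion yields $\bar\x_\th-\x_0={\CorrL}_T^+(P_T\epsilon-\mu e)+o(\norm{\epsilon}+\mu+\norm{\Corr-\CorrL})$, whence, together with the scaling constraints of~\eqref{eq-condition-closeness}, the size estimate $\norm{\bar\x_\th-\x_0}=O(\norm{\epsilon})$ of~\eqref{eq-model-consistency}.

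\textbf{Step 2 (certifying global optimality --- the crux).} Set $\eta_\th:=\mu^{-1}(u-\Corr\bar\x_\th)$. Since $\ener(\cdot,\th)$ is convex, $\bar\x_\th$ solves~\eqref{eq-regularization-lagrangian} as soon as $\eta_\th\in\partial J(\bar\x_\th)$, and the identity $\tilde G(\bar\x_\th,\th)=0$ already forces $P_{\tgtManif{\bar\x_\th}{\Mm}}\eta_\th=e_{\bar\x_\th}$, i.e. $\eta_\th$ carries the correct tangential component. Substituting the expansion of $\bar\x_\th-\x_0$ into $\eta_\th=\mu^{-1}(\epsilon-\Corr(\bar\x_\th-\x_0))$ and using $\CorrL {\CorrL}_T^+ e=\etaL{\CorrL}$, one sees that $\eta_\th$ stays within distance $O(\norm{\epsilon}\mu^{-1}+\norm{\Corr-\CorrL}+\mu)$ of $\etaL{\CorrL}$. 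Now combine two ingredients: the non-degeneracy assumption $\etaL{\CorrL}\in\ri(\partial J(\x_0))$ gives a fixed relative-interior ``buffer'' around $\etaL{\CorrL}$ inside $\partial J(\x_0)$, and the continuity property~\eqref{PS-DiffCont} --- equivalently, inner and outer semicontinuity of $\partial J$ along $\Mm$ at $\x_0$ --- makes the subdifferential slices $\partial J(\bar\x_\th)$, whose affine hull and shape are governed by $e_{\bar\x_\th}$ and $\tgtManif{\bar\x_\th}{\Mm}$, converge to $\partial J(\x_0)$ as $\th\to\th_0$. Together these yield a constant $C>0$ for which~\eqref{eq-condition-closeness} forces $\eta_\th\in\ri(\partial J(\bar\x_\th))$, so $\bar\x_\th$ is a global minimizer of~\eqref{eq-regularization-lagrangian}. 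Making this last implication rigorous --- turning the qualitative set-valued continuity of $\partial J$ along $\Mm$ into the quantitative membership statement and matching it against the $O(C)$-neighbourhood of a relative-interior point of the limiting subdifferential --- is the step I expect to be the main obstacle.

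\textbf{Step 3 (uniqueness and conclusion).} Let $\x'$ be any minimizer of~\eqref{eq-regularization-lagrangian}. The quadratic part of $\ener(\cdot,\th)$ is strictly convex along $\Im(\Corr)$, which contains $u$, so $\Corr\x'=\Corr\bar\x_\th$ and therefore $\eta_\th\in\partial J(\x')$ as well. Since $\eta_\th\in\ri(\partial J(\bar\x_\th))$ and $\ker(\Corr)\cap\tgtManif{\bar\x_\th}{\Mm}=\{0\}$ for $\Corr$ close to $\CorrL$, the standard nondegeneracy mechanism for partly smooth functions --- a subgradient lying in the relative interior of $\partial J$ identifies the tangent model subspace, and restricted injectivity then pins down the point --- forces $\x'=\bar\x_\th$. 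Hence $\x_\th=\bar\x_\th$, which lies on $\Mm$, and the expansion of Step~1 gives $\norm{\x_\th-\x_0}=O(\norm{\epsilon})$, establishing~\eqref{eq-model-consistency}.
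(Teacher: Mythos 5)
Your architecture is essentially the paper's: produce a candidate point on $\Mm$ from the first-order condition of the problem restricted to $\Mm$, show the associated dual vector $\eta_\th=\mu^{-1}(u-\Corr\x_\th)$ converges to $\etaL{\CorrL}$, invoke the non-degeneracy $\etaL{\CorrL}\in\ri(\partial J(\x_0))$ to certify that the candidate solves the original convex problem, and conclude uniqueness from restricted injectivity (the paper packages this last step as Proposition~\ref{prop:uniqueness}). The one structural difference is in Step~1: you build the candidate by the implicit function theorem applied to the tangential stationarity map $\tilde G$, whereas the paper takes a minimizer of $\ener(\cdot,\th)$ over $\Mm\cap\Kk$ for a compact neighbourhood $\Kk$ of $\x_0$, proves its convergence to $\x_0$ (Lemma~\ref{prop-conv-primal}), and only then extracts the first-order condition and the expansion via the $C^1$ projector $P_\Mm$ of \cite{LewisMalick08}. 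Your route buys local uniqueness and $C^1$ dependence of the candidate for free; the paper's route avoids setting up a chart and the bundle-valued IFT. Both are legitimate. (A minor point you inherit from the paper: the expansion actually gives $\norm{\x_\th-\x_0}=O(\max(\norm{\epsilon},\mu))$, and under \eqref{eq-condition-closeness} it is $\norm{\epsilon}$ that is dominated by $\mu$, not the reverse; the $O(\norm{\epsilon})$ in \eqref{eq-model-consistency} is to be read with $\mu$ tuned proportionally to $\norm{\epsilon}$.)

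The genuine gap is exactly the one you flag at the end of Step~2: you never prove that $\eta_\th\in\ri(\partial J(\bar\x_\th))$, or even that $\eta_\th\in\partial J(\bar\x_\th)$. Closeness of $\eta_\th$ to a relative-interior point of $\partial J(\x_0)$ does not by itself give membership in $\partial J(\bar\x_\th)$, because the latter is a \emph{different} convex set whose normal slice could a priori shrink or drift as $\x$ moves along $\Mm$; outer semicontinuity of $\partial J$ goes in the wrong direction for this. The paper closes the gap by contradiction plus compactness: if the claim fails along a subsequence, then $\eta_k$ lies on the relative boundary of $\partial J(\x_k)$, so there exist unit vectors $z_k\in T_{\x_k}^\bot$ with $\dotp{z_k}{\eta-\eta_k}\geq 0$ for all $\eta\in\partial J(\x_k)$. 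Extracting a cluster point $z^\star\in T^\bot$, $z^\star\neq 0$, and using the \emph{inner} semicontinuity of $\partial J$ along $\Mm$ (the continuity property \eqref{PS-DiffCont}, in the form of Remark~\ref{rem:assump}: every $v\in\partial J(\x_0)$ is a limit of some $v_k\in\partial J(\x_k)$), one passes to the limit in the inequality and obtains $\dotp{z^\star}{v-\etaL{\CorrL}}\geq 0$ for all $v\in\partial J(\x_0)$, i.e.\ a supporting hyperplane through $\etaL{\CorrL}$ with normal in $T^\bot=\VecHull(\partial J(\x_0))$, contradicting $\etaL{\CorrL}\in\ri(\partial J(\x_0))$. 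This is the only place the continuity axiom of partial smoothness is used, and it is the step your proposal leaves open; without it neither global optimality of $\bar\x_\th$ nor the uniqueness mechanism of Step~3 (which needs $\eta_\th\in\ri(\partial J(\bar\x_\th))$) is established.
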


This theorem is proved in Section~\ref{sec-proof-thm-stability}.

\OnlyLong{
\begin{rem}[Deterministic vs. randomized settings]
A typical case of application of this result is in inverse problem for imaging. In this setting, $\Phi$ is fixed, so that one directly uses $\CorrL = \Corr = \Phi^*\Phi/\P$. In contrast (as detailed in Theorem~\ref{thm-consistency}), in statistics or machine learning, one rather considers the asymptotic regime where the number of rows of $\Phi$ increases, so that $\Corr$ only reach $\CorrL$ in the limit $\P \rightarrow +\infty$. 
\end{rem}

\begin{rem}[Identification of the manifold]
	Theorem~\ref{thm-stability} guarantees that, under some hypotheses on $\x_0$ and $\th$, $\x_\th$ belongs to $\Mm$. For all the regularizations considered in Section~\ref{sec-examples}, one can furthermore show that actually, under these hypotheses, $\Mm_{\x_\th} = \Mm$. This is because, for any $(\x,\x')$ with $\x' \in \Mm_{\x}$ close enough to $\x$, one has $\Mm_{\x'}=\Mm_{\x}$.
\end{rem}
}{}

The following proposition, proved in Section~\ref{sec-proof-prop-instability}, shows that Theorem~\ref{thm-stability} is in some sense sharp, since the hypothesis $\etaL{\Corr} \in \ri(\partial J(\x_0))$ (almost) characterizes the stability of $\Mm$. 

\begin{prop}\label{prop-instability}
	We suppose that $\x_0$ is the unique solution of $\Pp_{(0,\CorrL \x_0, \CorrL)}$ and that
	\eql{\label{eq-instability-cond}
		\ker(\CorrL) \cap T = \{0\}, 
		\qandq
		\etaL{\CorrL} \notin \partial J(\x_0).
	}
	Then there exists $C>0$ such that if~\eqref{eq-condition-closeness} holds, then  any solution $\x_\th$ of~\eqref{eq-regularization-lagrangian} for $\mu>0$ satisfies $\x_\th \notin \Mm$.  
\end{prop}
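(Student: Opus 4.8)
The plan is to argue by contradiction, recycling the first‑order analysis behind Theorem~\ref{thm-stability}: the same estimates that certify $\x_\th\in\Mm$ when $\etaL{\CorrL}\in\ri(\partial J(\x_0))$ will, when instead $\etaL{\CorrL}\notin\partial J(\x_0)$, show that the optimality conditions cannot hold at any point of $\Mm$. \emph{Step 1 (localisation).} First I would show that, for $C$ small, every solution $\x_\th$ of~\eqref{eq-regularization-lagrangian} is forced close to $\x_0$. Writing $\ener(\x,\th)=J(\x)+\frac{1}{2\mu}\norm{\Corr^{1/2}(\x-v)}^2$ with $\Corr v=u$, the bound $\ener(\x_\th,\th)\le\ener(\x_0,\th)=J(\x_0)+O(\norm{\epsilon}^2/\mu)$ together with $\norm{\epsilon}^2/\mu\le C\norm{\epsilon}\to 0$ gives $J(\x_\th)\le J(\x_0)+o(1)$ and $\norm{\Corr(\x_\th-\x_0)}\to 0$; a standard $\Gamma$-convergence / lower-semicontinuity argument, using that $\x_0$ is the \emph{unique} minimiser of $\Pp_{(0,\CorrL\x_0,\CorrL)}$, then shows that $\x_\th$ enters any prescribed neighbourhood of $\x_0$ once $C$ is small enough. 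For $C$ small this puts $\x_\th$ where $J$ is partly smooth relative to $\Mm$, and keeps $\ker(\Corr)\cap T=\{0\}$ (an open condition around $\CorrL$), so that $\Corr_T$ is invertible on $T$.

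\emph{Step 2 (the subgradient is pinned near $\etaL{\CorrL}$).} Suppose, towards a contradiction, that $\x_\th\in\Mm$. Then $\eta_\th:=(u-\Corr\x_\th)/\mu\in\partial J(\x_\th)$ by optimality; moreover $\x_\th$ minimises $\ener(\cdot,\th)$ restricted to $\Mm$, so the Riemannian optimality condition $P_{T_{\x_\th}}(u-\Corr\x_\th)=\mu\,e_{\x_\th}$ holds, where $e_{\x}:=P_{T_\x}(\partial J(\x))$ is a singleton (the Riemannian gradient of $J|_\Mm$), continuous along $\Mm$ by partial smoothness. With $\delta:=\x_\th-\x_0$ and $u-\Corr\x_\th=\epsilon-\Corr\delta$, and using the curvature estimate $P_{T^\bot}\delta=O(\norm{\delta}^2)$ (valid since $\x_0,\x_\th\in\Mm$ and $T_{\x_0}=T$), the continuities $T_{\x_\th}\to T$, $e_{\x_\th}\to e$, and invertibility of $\Corr_T$ on $T$, one solves the Riemannian condition to get $P_T\delta=\Corr_T^+(P_T\epsilon-\mu e)+o(\mu)$, hence $\norm{\delta}=O(\mu)$ and $\norm{\delta}^2/\mu\to 0$. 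Substituting back and using $\etaL{\Corr}=\Corr\Corr_T^+ e$ from Definition~\ref{defn-etaF},
\eq{
	\eta_\th \;=\; \frac{\epsilon-\Corr\delta}{\mu} \;=\; \etaL{\Corr} \;+\; \frac{\epsilon-\Corr\Corr_T^+ P_T\epsilon}{\mu} \;+\; o(1) .
}
Since $\CorrL$ (hence $\Corr$) is symmetric positive semidefinite with $\ker(\Corr)\cap T=\{0\}$, one has $\mathrm{range}(\Corr_T)=T$ and $\Corr_T\Corr_T^+=P_T$, so $P_T(\epsilon-\Corr\Corr_T^+ P_T\epsilon)=0$: the middle term lies in $T^\bot$ with norm $\le C'\norm{\epsilon}/\mu\le C'C$. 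Using $\etaL{\Corr}\to\etaL{\CorrL}$ and $P_T\etaL{\CorrL}=e$, this yields $P_{T^\bot}\eta_\th=P_{T^\bot}\etaL{\CorrL}+O(C)+o(1)$.

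\emph{Step 3 (contradiction).} Write $\partial J(\x)=e_\x+S_\x$ with $S_\x:=P_{T_\x^\bot}(\partial J(\x))\subset T_\x^\bot$ compact and convex; partial smoothness gives $T_{\x_\th}\to T$, $e_{\x_\th}\to e$, and $S_{\x_\th}\to S_{\x_0}$ in Hausdorff distance (continuity of $\partial J$ relative to $\Mm$). The hypothesis $\etaL{\CorrL}\notin\partial J(\x_0)$ is exactly $P_{T^\bot}\etaL{\CorrL}\notin S_{\x_0}$, so $d:=\dist(P_{T^\bot}\etaL{\CorrL},S_{\x_0})>0$ as $S_{\x_0}$ is closed. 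Choosing $C$ so small that $C'C<d/2$, Step~2 yields $\dist(P_{T_{\x_\th}^\bot}\eta_\th,S_{\x_\th})\ge d/2-o(1)>0$, i.e.\ $\eta_\th\notin\partial J(\x_\th)$ — contradicting optimality. Hence no solution $\x_\th$ belongs to $\Mm$.

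The technical heart is Step~2: carrying out the perturbation bookkeeping rigorously — controlling the errors due to the curvature of $\Mm$, to the genuine \emph{continuity} (not just outer semicontinuity) of $\x\mapsto\partial J(\x)$, $T_\x$, $e_\x$ along $\Mm$, and to $\Corr-\CorrL$, and checking they are $o(1)$ under~\eqref{eq-condition-closeness}. Once this is in place, the only structural difference from Theorem~\ref{thm-stability} is the orientation of $\etaL{\CorrL}$ relative to $\partial J(\x_0)$, which turns the successful certificate verification into a contradiction; Step~1's $\Gamma$-convergence, where uniqueness of the noiseless solution enters, is what lets one work in a fixed chart around $\x_0$.
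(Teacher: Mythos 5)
Your argument is correct and follows essentially the same route as the paper: assume $\x_\th\in\Mm$, rerun the primal/dual estimates from the proof of Theorem~\ref{thm-stability} to force $\eta_\th=(u-\Corr\x_\th)/\mu$ within $O(C)+o(1)$ of $\etaL{\CorrL}$, and then contradict $\eta_\th\in\partial J(\x_\th)$ via (outer) semicontinuity of the subdifferential, with the uniqueness of the noiseless solution entering only through the localisation step. The one cosmetic caveat is in your Step~1: the bound $\ener(\x_0,\th)=J(\x_0)+O(\norm{\epsilon}^2/\mu)$ hides a factor $\norm{\Corr^+}$ that need not stay bounded as $\Corr\to\CorrL$ (the rank may jump), which is avoided by expanding the inequality $\ener(\x_\th,\th)\le\ener(\x_0,\th)$ directly as in Lemma~\ref{prop-conv-primal}.
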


In the particular case where $\epsilon=0$ (no noise) and $\CorrL=\Corr$, this result shows that the manifold $\Mm$ is not correctly identified when solving $\Pp_{(\mu,\Corr \x_0,\Corr)}$ for any $\mu>0$ small enough. 

\begin{rem}[Critical case]
The only case not covered by either Theorem~\ref{thm-stability} or Proposition~\ref{prop-instability} is when $\etaL{\CorrL} \in \text{rbound}(\partial J(\x_0))$ (the relative boundary). In this case, one cannot conclude, since depending on the noise $w$, one can have either stability or non-stability of $\Mm$. We refer to~\cite{vaiter-analysis} where an example illustrates this situation for the 1-D total variation $J=\norm{\nabla \cdot}_1$ (here $\nabla$ is a discretization of the 1-D derivative operator).
\end{rem}

\subsection{Probabilistic model consistency.}

We now turn to study consistency of our estimator. In this section, we work under the classical setting where $\N$ and $\x_0$ are fixed as the number of observations $\P \to \infty$. We consider that the design matrix and the noise are random. More precisely, the data $(\Phirow_i,w_i)$ are random vectors in $\RR^\N \times \RR$, $i=1,\cdots,n$, where $\Phirow_i$ is the $i$-th row of $\Phi$, are assumed independent and identically distributed (i.i.d.) samples from a joint probability distribution such that $\EE\pa{w_i|\Phirow_i}=0$, finite fourth-order moments, i.e. $\EE\pa{w_i^4} < +\infty$ and $\EE\pa{\norm{\Phirow_i}^4} < +\infty$. Note that in general, $w_i$ and $\Phirow_i$ are not necessarily independent. 
It is possible to extend our result to other distribution models by weakening some of the assumptions and strenghthening others, see e.g. \cite{KnightFu2000,Zhao-irrepresentability,bach2008consistency}. Let's denote $\CorrL = \EE(\xi^* \xi) \in \RR^{\N \times \N}$, where $\xi$ is any row of $\Phi$. We do not make any assumption on invertibility of~$\CorrL$. 

To make the discussion clearer, the canonical parameters $\th$ will be indexed by $\P$. The estimator $\x_{\th_\P}$ obtained by solving $(\Pp_{\th_\P})$ for a sequence $\th_\P$ is said to be consistent for $\x_0$ if, $\lim_{\P \to +\infty} \Pr\pa{\x_{\th_\P} ~ \text{is unique}} \to 1$ and $\x_{\th_\P}$ converges to $\x_0$ in probability. The estimator is said to be model consistent if $\lim_{\P \to +\infty} \Pr\pa{\x_{\th_\P} \in \Mm} \to 1$, where $\Mm$ is the manifold associated to $\x_0$.

The following result ensures model consistency for certain scaling of $\mu_\P$. It is proved in Section~\ref{sec-proof-thm-consistency}

\begin{thm}\label{thm-consistency}
	If conditions~\eqref{eq-hyp-etaF} hold and 
	\eql{\label{eq-lambda-scaling}
		\mu_\P = o(1)
		\qandq
		\mu_\P^{-1} = o(\P^{1/2}).
	} 
	Then the estimator $\x_{\th_\P}$ of $\x_0$ obtained by solving $(\Pp_{\th_\P})$ is model consistent. 
\end{thm}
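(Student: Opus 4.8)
The plan is to deduce Theorem~\ref{thm-consistency} from the deterministic Theorem~\ref{thm-stability}, applied with the fixed matrix $\CorrL = \EE(\xi^*\xi)$ — for which the hypotheses~\eqref{eq-hyp-etaF} hold by assumption — by showing that the random canonical parameters $\th_\P = (\mu_\P, u_\P, \Corr_\P)$ satisfy the closeness condition~\eqref{eq-condition-closeness} with probability tending to one, for the \emph{same} constant $C$ produced by Theorem~\ref{thm-stability}. The crucial point is that this $C$ depends only on $J$, $\x_0$ and $\CorrL$, all of which are non-random and fixed as $\P \to \infty$; so it is enough to control the three quantities $\mu_\P$, $\norm{\Corr_\P - \CorrL}$ and $\norm{\epsilon_\P}\mu_\P^{-1}$ individually and then intersect the resulting high-probability events.

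The term $\mu_\P$ is purely deterministic: $\mu_\P = o(1)$ gives $\mu_\P \le C$ for all $\P$ large enough. For $\norm{\Corr_\P - \CorrL}$, I would write $\Corr_\P = \tfrac{1}{\P}\sum_{i=1}^\P \Phirow_i^* \Phirow_i$ as an empirical average of i.i.d.\ matrices with common expectation $\CorrL$; since $\EE\norm{\Phirow_i}^4 < +\infty$ (in particular each entry of $\Phirow_i^*\Phirow_i$ has finite mean, indeed finite variance), the weak law of large numbers applied entrywise, combined with the equivalence of norms on $\RR^{\N \times \N}$, yields $\norm{\Corr_\P - \CorrL} \to 0$ in probability, hence $\Pr\pa{\norm{\Corr_\P - \CorrL} \le C} \to 1$.

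The term $\norm{\epsilon_\P}\mu_\P^{-1}$ is where the scaling~\eqref{eq-lambda-scaling} enters, and it is the only place requiring a genuine probabilistic estimate. Here $\epsilon_\P = \tfrac{1}{\P}\sum_{i=1}^\P \Phirow_i^* w_i$ is an average of i.i.d.\ random vectors; the assumption $\EE(w_i|\Phirow_i) = 0$ gives $\EE(\Phirow_i^* w_i) = \EE\pa{\Phirow_i^* \EE(w_i|\Phirow_i)} = 0$, and Cauchy--Schwarz together with $\EE(w_i^4) < \infty$ and $\EE\norm{\Phirow_i}^4 < \infty$ gives $\EE\norm{\Phirow_i^* w_i}^2 = \EE\pa{\norm{\Phirow_i}^2 w_i^2} < +\infty$. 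Therefore $\sqrt{\P}\,\epsilon_\P$ has a bounded covariance matrix, so $\norm{\epsilon_\P} = O_P(\P^{-1/2})$ by Chebyshev's inequality (the central limit theorem would give the same order). Consequently $\norm{\epsilon_\P}\mu_\P^{-1} = O_P\pa{\P^{-1/2}\mu_\P^{-1}} = o_P(1)$, using precisely $\mu_\P^{-1} = o(\P^{1/2})$, and hence $\Pr\pa{\norm{\epsilon_\P}\mu_\P^{-1} \le C} \to 1$.

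Intersecting the three events, each of probability tending to one, I obtain that~\eqref{eq-condition-closeness} holds with probability tending to one; on this event Theorem~\ref{thm-stability} (with $\Corr = \Corr_\P$, $\epsilon = \epsilon_\P$, $\mu = \mu_\P$) guarantees that $\x_{\th_\P}$ is the unique solution of $(\Pp_{\th_\P})$ and that $\x_{\th_\P} \in \Mm$, which is exactly model consistency; the bound $\norm{\x_{\th_\P} - \x_0} = O(\norm{\epsilon_\P}) = O_P(\P^{-1/2})$ moreover shows the estimator is fully consistent, not merely model consistent. The main — and essentially only — obstacle is conceptual rather than technical: one must make sure the constant $C$ is decoupled from the random data, which is precisely why Theorem~\ref{thm-stability} is stated with a universal $C$ attached to the fixed matrix $\CorrL$; once that is in place the argument is a routine combination of the law of large numbers for $\Corr_\P$ and a second-moment bound for $\epsilon_\P$ under the stated fourth-moment hypotheses.
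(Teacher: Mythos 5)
Your proposal is correct and follows essentially the same route as the paper: reduce to verifying the closeness condition~\eqref{eq-condition-closeness} of Theorem~\ref{thm-stability} with probability tending to one, using the finite fourth-moment assumptions to get $\norm{\Corr_\P-\CorrL}\to 0$ and $\norm{\epsilon_\P}=O_P(\P^{-1/2})$, and then the scaling~\eqref{eq-lambda-scaling} to absorb $\mu_\P^{-1}$. The paper invokes ``classical results on convergence of sample covariances'' where you spell out Chebyshev and Cauchy--Schwarz, and your explicit remark that the constant $C$ depends only on the fixed quantities $J$, $\x_0$, $\CorrL$ is a point the paper leaves implicit; otherwise the arguments coincide.
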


\OnlyLong{
\begin{rem}[Sharpness of the criterion]
	Conversely, if $\x_0$ is the unique solution of $\Pp_{0,\Corr \x_0,\Corr}$, conditions~\eqref{eq-instability-cond} and~\eqref{eq-lambda-scaling} hold, one shows that the estimator of $\x_0$ defined by~\eqref{eq-regularization-lagrangian} is not model consistent. 
\end{rem}
}

\OnlyLong{
\subsection{Algorithmic Implications}

A popular scheme to compute a solution of~\eqref{eq-regularization-lagrangian} is the Forward-Backward splitting algorithm. A comprehensive treatment of the convergence properties of this algorithm, and other proximal splitting schemes, can be found in the monograph~\cite{Bauschke-book}. Starting from some $\x_0 \in \RR^\N$, the algorithm implements the following iteration
\eq{
	\x_{k+1} = \text{Prox}_{\tau\mu J}\pa{ \x_k + \tau ( u - \Corr \x_k ) },  
}
where the step size satisfies $0 < \tau < 2/\norm{\Corr}$, and the proximity operator is defined as, for $\ga > 0$
\eq{
	\text{Prox}_{\ga J}(\x) = \uargmin{\x' \in \RR^\N} \frac{1}{2}\norm{\x-\x'}^2 + \ga J(\x').
}

The following theorem shows that the Forward-Backward algorithm correctly identifies the manifold $\Mm$ after a finite number of iterations. 

\begin{thm}\label{thm-fb}
	Suppose that the assumptions of Theorem~\ref{thm-stability} hold. Then, for $k$ large enough, $\x_k \in \Mm$.
\end{thm}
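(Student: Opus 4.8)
The plan is to combine two ingredients: the convergence of the Forward--Backward iterates to the unique minimizer furnished by Theorem~\ref{thm-stability}, and the finite identification property of partly smooth functions applied at that minimizer. Problem~\eqref{eq-regularization-lagrangian} is of the composite form $\umin{\x \in \RR^\N} J(\x) + f(\x)$ where, up to an additive constant, $f(\x) = \frac{1}{2\mu}\dotp{\Corr\x}{\x} - \frac{1}{\mu}\dotp{\x}{u}$ is convex and $\Cdeux$ with gradient $\nabla f(\x) = \frac{1}{\mu}(\Corr\x - u)$ Lipschitz of constant $\norm{\Corr}/\mu$, and the recursion in the statement is exactly proximal-gradient descent on this splitting with step $\tau\mu < 2\mu/\norm{\Corr}$ on $f$. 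Under the assumptions of Theorem~\ref{thm-stability} the solution $\x_\th$ is unique, so the standard convergence theory of Forward--Backward splitting (see~\cite{Bauschke-book}) gives $\x_k \to \x_\th$, and then $\x_\th \in \Mm$ with $\norm{\x_\th - \x_0} = O(\norm{\epsilon})$.

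I would first read off a converging sequence of subgradients from the iteration. By definition of the proximity operator, $\rho_k = \frac{1}{\tau\mu}\big(\x_{k-1} + \tau(u - \Corr\x_{k-1}) - \x_k\big) \in \partial J(\x_k)$ for every $k$, and since $\x_{k-1},\x_k \to \x_\th$ this yields $\rho_k \to \eta_\th := \frac{1}{\mu}(u - \Corr\x_\th)$, which is exactly the subgradient in $\partial J(\x_\th)$ selected by the first-order optimality condition $-\nabla f(\x_\th) \in \partial J(\x_\th)$.

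The key remaining step is the nondegeneracy of this limiting certificate: $\eta_\th \in \ri(\partial J(\x_\th))$. This is where condition~\eqref{eq-hyp-etaF} enters. The construction of $\x_\th$ underlying Theorem~\ref{thm-stability} produces $\eta_\th$ as a small perturbation of $\etaL{\CorrL} \in \ri(\partial J(\x_0))$, with perturbation size controlled by~\eqref{eq-condition-closeness}; moreover $\x_\th \in \Mm$ lies close to $\x_0$, so the tangent model subspaces $T_{\x_\th}$ and $T_{\x_0}$ — hence the dimensions of the affine hulls of $\partial J(\x_\th)$ and $\partial J(\x_0)$ — coincide, and the continuity of $\partial J$ along $\Mm$ (property~\eqref{PS-DiffCont}, together with outer semicontinuity) then transfers membership in the relative interior from $\x_0$ to $\x_\th$.

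Finally, I would invoke the finite identification property of partly smooth functions (Hare--Lewis~\cite{HareLewis04}; prox-regularity, which that result requires, is automatic for the proper closed convex $J$): if $J$ is partly smooth at $\x_\th$ relative to $\Mm$, $\x_k \to \x_\th$, and there exist $\rho_k \in \partial J(\x_k)$ with $\rho_k \to \eta_\th \in \ri(\partial J(\x_\th))$, then $\x_k \in \Mm$ for all $k$ large enough; applied to the iterates and the $\rho_k$ above, this concludes. I expect the nondegeneracy step to be the main obstacle: one must check carefully that the relative-interior condition $\etaL{\CorrL} \in \ri(\partial J(\x_0))$ survives the combined perturbation from $\CorrL$ to $\Corr$ and from $\x_0$ to $\x_\th$, which relies on the interplay between the continuity axiom of partial smoothness and the local constancy of the tangent model subspace along $\Mm$; the convergence and identification steps are then essentially off the shelf.
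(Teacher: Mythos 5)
Your proposal is correct and follows essentially the same route as the paper: the paper's proof likewise extracts from the proof of Theorem~\ref{thm-stability} that $\x_\th$ is unique and that $\eta_\th = \frac{u - \Corr\x_\th}{\la} \in \ri(\partial J(\x_\th))$ (this is exactly the ``convergence inside the relative interior'' step there), and then invokes the finite identification theorem for Forward--Backward on partly smooth functions. The only cosmetic difference is that you unpack that cited identification theorem into its two ingredients (iterate convergence plus the Hare--Lewis identification property via the subgradients read off the prox step), which the paper delegates wholesale to the reference.
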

\begin{proof} A close inspection of the proof of Theorem~\ref{thm-stability} shows that the solution $\x_\th$ of~\eqref{eq-regularization-lagrangian}  is unique and that the vector $\eta_\th = \frac{u- \Corr \x_\th}{\la}$ satisfies $\eta_\th \in \ri(\partial J(\x_\th))$ when \eqref{eq-hyp-etaF} and \eqref{eq-condition-closeness} hold. This in turn implies that the assumptions of~\cite[Theorem~13.7]{HareFB11}, see also~\cite[Theorem~2]{Hare-Lewis-Algo}\footnote{The result of~\cite{HareFB11} applies more generally to variable metric (Newton-like) Forward-Backward when the smooth term is assumed to be $\Cdeux$. This can be easily adapted to our case by taking the metric as the identity. Observe also that the result of~\cite{Hare-Lewis-Algo} applies to the projected gradient algorithm, i.e. when $J$ is the indicator function of a closed convex partly smooth set, and the proof easily extends also to an arbitrary partly smooth closed convex function.}, are fulfilled and thus shows the announced manifold identification result. 
\end{proof}

This result sheds some light on the convergence behaviour of this algorithm in the favourable case where condition~\eqref{eq-hyp-etaF} holds and $(\norm{\Corr-\CorrL}, \norm{\epsilon}/\mu,\mu)$ are sufficiently small. 
}

\subsection{Relation to Previous Works}

\OnlyLong{
\paragraph{Works on linear convergence rates.}

Following the pioneer work~\cite{burger2004convergence} (who study convergence in term of Bregman divergence), there is a large amount of works on the study conditions under which $\norm{\x_{\th}-\x_0}=O(\norm{\epsilon})$ (so-called linear convergence rate) where $\x_{\th}$ is any solution of~\eqref{eq-regularization-lagrangian}, see for instance the book~\cite{ScherzerBook09} for an overview of these results.  The initial work of~\cite{GrassmaiCPAM} proves a sharp criteria to ensure linear convergence rate for the $\ell^1$ norm, and this approach is further extended to arbitrary convex functions by~\cite{GrasmairPositively11} and~\cite{2013-sampta-decomposable} who proves respectively convergence rate in term of $J$ functional and $\ell^2$ norm. 

These works show that if 
\eql{\label{eq-source-condition}
	\ker(\Corr) \cap T = \{0\}
	\qandq
	\exists \eta \in \Im(\Corr) \cap \ri(\partial J(\x_0))
}
(which is often called the source condition), then linear convergence rate holds. Note that condition~\eqref{eq-hyp-etaF} implies~\eqref{eq-source-condition}, but it is stronger. Indeed, condition~\eqref{eq-source-condition} does not ensure model consistency~\eqref{eq-model-consistency}, which is a stronger requirement. Model consistency requires, as we show in our work, the use of a special certificate, the minimal norm certificate $\eta_0$, which is equal to $\etaL{\Corr}$ if $\etaL{\Corr} \in \ri(\partial J(\x_0))$ (see Proposition~\ref{prop:uniqueness}). 
}

\OnlyLong{\paragraph{Works on model consistency.}}

Theorem~\ref{thm-stability} is a generalization of a large body of results in the literature. For the Lasso, i.e. $J=\norm{\cdot}_1$, and when $\Corr=\CorrL$, to the best of our knowledge, this result was initially stated in~\cite{fuchs2004on-sp}. In this setting, the result~\eqref{eq-model-consistency} corresponds to the correct identification of the support, i.e. $\supp(\x_\th)=\supp(\x_0)$. 
Condition~\eqref{eq-hyp-etaF} for $J=\norm{\cdot}_1$ is known in the statistics literature under the name ``irrepresentable condition'', see e.g. \cite{Zhao-irrepresentability}. 
\cite{KnightFu2000} have shown estimation consistency for Lasso for fixed $\N$ and $\x_0$ and asymptotic normality of the estimates.
The authors in \cite{Zhao-irrepresentability} proved Theorem~\ref{thm-consistency} for $J=\norm{\cdot}_1$, though under slightly different assumptions on the covariance and noise distribution.
A similar result was established in \cite{Jia-ElasticNet-Consistency} for the elastic net, i.e. $J=\norm{\cdot}_1 + \rho \norm{\cdot}_2^2$ for $\rho > 0$.
In \cite{Bach08group} and~\cite{bach2008consistency}, the author has shown Theorem~\ref{thm-consistency} for two special cases, namely the group Lasso nuclear/trace norm minimization. Note that these previous works assume that the asymptotic covariance $\CorrL$ is invertible. We do not impose such an assumption, and only require the weaker restricted injectivity condition $\ker(\CorrL) \cap T = \{0\}$. 
In a previous work~\cite{vaiter-analysis}, we have proved an instance of Theorem~\ref{thm-stability} when $\Corr=\CorrL$ and $J(\x) = \norm{D^* \x}_1$, where $D \in \RR^{\N \times q}$ is an arbitrary linear operator. This covers as special cases the discrete anisotropic total variation or the fused Lasso. 
This result was further generalized in~\cite{vaiter2013model} when $\Corr=\CorrL$, and $J$ belongs to the class of partly smooth functions relative to  linear manifolds $\Mm$, i.e. $\Mm=T_{\x}$. Typical instances encompassed in this class are the $\ell^1-\ell^2$ norm, or its analysis version, as well as polyhedral gauges including the $\ell^\infty$ norm. Note that the nuclear norm (and composition of it with linear operators as proposed for instance in~\cite{Grave-TraceLasso,RichardBV13}), whose manifold is not linear, does not fit into the framework of~\cite{vaiter2013model}, while it is covered by Theorem~\ref{thm-stability}. 
\OnlyLong{Lastly, a similar result was proved in~\cite{2013-duval-sparsespikes} for an infinite dimensional sparse recovery problem over space of measures, when $J$ the total variation of a measure. In this setting, a interesting finding is that, when $\eta_0 \in \ri(\partial J(\x_0))$, $\eta_0$ is not equal to $\etaL{\Phi^*\Phi}$ but to a difference certificate (called ``vanishing derivative'' certificate in~\cite{2013-duval-sparsespikes}) that can also be computed by solving a linear system.}
\OnlyLong{
Condition~\eqref{eq-hyp-etaF} is often used when $\Phi$ is drawn from the Gaussian matrix ensemble to asses the performance of compressed sensing recovery with $\ell^1$ norm, see~\cite{wainwright-sharp-thresh,dossal2011noisy}. This is extended to a more general family of decomposable norms (including in particular $\ell^1-\ell^2$ norms and the nuclear norm) in~\cite{candes2011simple}, but only in the noiseless setting. Our result shows that this analysis extends to the noisy setting as well, and ensures model consistency at high signal to low noise levels. 
The same condition is used to asses the performance of matrix completion (i.e. the operator $\Phi$ is a random masking operator) in a noiseless setting~\cite{candesExactCompletion,candesT09}.
It was also used to ensure $\ell^2$ robustness of matrix completion in a noisy setting~\cite{CandesP10}, and our findings shows that these results also ensure rank consistency for matrix completion at high signal to low noise levels.}

\OnlyLong{
\paragraph{Sensitivity analysis.} 

Theorem~\ref{thm-stability} can be seen as a sensitivity analysis of the minimizers of the function $f$ at the point $(\x,\th)=(\x_0,\th_0^0)$. Classical sensitivity analysis of non-smooth functions seeks condition to ensure continuity of the map $\th \mapsto \x_\th$, see for instance~\cite{rockafellar1998variational}. This is usually guaranteed by the source condition~\eqref{eq-source-condition}, which, as already exposed, ensures linear convergence rate, and hence Lipschitz behaviour of this map. The analysis proposed by Theorem~\ref{thm-stability} goes one step further, by assessing that $\Mm_{\x_0}$ is a stable manifold (in the sense of~\cite{Wright-IdentSurf}), since the minimizer $\x_\th$ is unique and stays in $\Mm_{\x_0}$ for small $\th$. Our main source of inspiration for this analysis is the notion of partly smooth function introduced by Lewis~\cite{Lewis-PartlySmooth} in order to ensure the existence of stable manifolds. 
For convex functions (which is the setting considered in our work) this corresponds to the notion of $\Uu$-Lagrangian, introduced in~\citep{Lemarechal-ULagrangian}.
Loosely speaking, a partly smooth function behaves smoothly as we move on the identifiable manifold, and sharply if we move normal to the manifold. In fact, the behaviour of the function and of its minimizers (or critical points) depend essentially on its restriction to this manifold, hence offering a powerful framework for sensitivity analysis theory. In particular, critical points of partly smooth functions move stably on the manifold as the function undergoes small perturbations~\citep{Lewis-PartlySmooth,Lewis-PartlyTiltHessian}.  
A important and distinctive feature of our result is that, while the regularized $J$ is assumed to be partly smooth, the function $f$ is not partly smooth at $(\x,\th)=(\x_0,\th_0)$ relative to the manifold $\Mm_{\x_0} \times \Theta$ because of the indicator function (constrained problem) appearing in $\ener(\cdot,\th_0)$ when $\la=0$. Thus one cannot apply Theorem 5.7 of~\cite{Lewis-PartlySmooth}. We refer to Section~\ref{sec-sensitivity-lagrangian} for a discussion about this point. 
}



\section{Proofs}
\label{sec-proof-thm}

\LongShort{

\subsection{Sensitivity of the Lagrangian Problem}
\label{sec-sensitivity-lagrangian}

Before diving into the proof of Theorem~\ref{thm-stability}, we first show how the theory of partly smooth functions introduced in~\cite{Lewis-PartlySmooth} can be directly applied to study the sensitivity of~\eqref{eq-regularization-lagrangian} when $\mu>0$, and why some further refinement is needed to study the critical case $\mu=0$.

\begin{thm}
	Let $\x_\th$ be a solution of~\eqref{eq-regularization-lagrangian}.
        We assume that $J$ is locally partly smooth at $\x_\th$ relative to a set $\Mm$.
        If
	\eql{\label{eq-cond-sensi-lagr}
		\ker(\Corr) \cap T_{\x_\th} = \{0\}
		\qandq
		\frac{ \u - \Corr \x_\th }{\mu} \in \ri( \partial J(\x_\th) ), 
	} 
	then for $\th'$ close enough from $\th$, the solution $\x_{\th'}$ of~\eqref{eq-regularization-lagrangian} is unique and satisfies
	\eq{
          \x_{\th'} \in \Mm.
	}
\end{thm}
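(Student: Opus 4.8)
The plan is to read $\ener(\cdot,\th)$ as a partly smooth function for which $\x_\th$ is a \emph{nondegenerate strong} minimizer, to apply the sensitivity theory of~\cite{Lewis-PartlySmooth} to the $\Cdeux$ perturbation induced by moving $\th$, and then to use convexity of~\eqref{eq-regularization-lagrangian} to promote the resulting local minimizer to the unique global solution. (Throughout I restrict to $\th'=(\mu',\u',\Corr')$ with $\Corr'\succeq 0$, so that~\eqref{eq-regularization-lagrangian} stays a genuine convex program; this is automatic when $\Corr'=(\Phi')^*\Phi'/\P$.) First I would split $\ener(\x,\th')=J(\x)+g_{\th'}(\x)$ with $g_{\th'}(\x)=\frac{1}{2\mu'}\dotp{\Corr'\x}{\x}-\frac{1}{\mu'}\dotp{\x}{\u'}+\frac{1}{2\mu'}\dotp{(\Corr')^{+}\u'}{\u'}$. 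Because $\mu>0$, the map $\th'\mapsto g_{\th'}$ is continuous in the $\Cdeux$ topology on a fixed neighbourhood of $\x_\th$ at $\th'=\th$; this is exactly where $\mu>0$ is used, and what collapses at $\mu=0$, explaining why the critical case requires the separate analysis of Theorem~\ref{thm-stability}. Since $g_\th$ is $\Cdeux$ on $\RR^\N$ (hence partly smooth relative to $\RR^\N$), the calculus rules for partly smooth functions (\cite[Corollary~4.8]{Lewis-PartlySmooth}) give that $\ener(\cdot,\th)=J+g_\th$ is partly smooth at $\x_\th$ relative to $\Mm$; moreover $\partial_\x\ener(\x_\th,\th)=\partial J(\x_\th)+\frac{1}{\mu}(\Corr\x_\th-\u)$ is a translate of $\partial J(\x_\th)$, so $\VecHull(\partial_\x\ener(\x_\th,\th))=\VecHull(\partial J(\x_\th))$ and the tangent model subspace of $\ener(\cdot,\th)$ at $\x_\th$ is again $T_{\x_\th}=\tgtManif{\x_\th}{\Mm}$.

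Next I would verify the two hypotheses of the sensitivity theorem at $\x_\th$. By Fermat's rule $0\in\partial_\x\ener(\x_\th,\th)$, i.e.\ $\frac{1}{\mu}(\u-\Corr\x_\th)\in\partial J(\x_\th)$, and the relative-interior part of~\eqref{eq-cond-sensi-lagr} is then precisely the nondegeneracy condition $0\in\ri(\partial_\x\ener(\x_\th,\th))$. For the second-order sufficient condition I would fix any $\Cdeux$ curve $\gamma$ in $\Mm$ with $\gamma(0)=\x_\th$ and $\dot\gamma(0)=h\in T_{\x_\th}$, and use the subgradient inequality $J(\gamma(t))\ge J(\x_\th)+\frac{1}{\mu}\dotp{\u-\Corr\x_\th}{\gamma(t)-\x_\th}$ (valid since $\frac1\mu(\u-\Corr\x_\th)\in\partial J(\x_\th)$): adding $g_\th(\gamma(t))$, the linear and cross terms cancel and one is left with $\ener(\gamma(t),\th)\ge\ener(\x_\th,\th)+\frac{1}{2\mu}\dotp{\Corr(\gamma(t)-\x_\th)}{\gamma(t)-\x_\th}$. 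Differentiating twice at $t=0$ (the first-order term vanishes because $\x_\th$ is critical for $\ener(\cdot,\th)|_\Mm$) yields $\nabla^2_\Mm(\ener(\cdot,\th)|_\Mm)(\x_\th)[h,h]\ge\frac{1}{\mu}\dotp{\Corr h}{h}$, and since $\Corr\succeq 0$ and $\ker(\Corr)\cap T_{\x_\th}=\{0\}$ (the restricted-injectivity part of~\eqref{eq-cond-sensi-lagr}) the right-hand side is $>0$ for $h\ne 0$. Hence the restricted Hessian is positive definite on $T_{\x_\th}$.

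With partial smoothness, nondegeneracy, and the second-order condition in hand, I would invoke~\cite[Theorem~5.7]{Lewis-PartlySmooth} for $\ener(\cdot,\th)$ and the $\Cdeux$-small perturbation $g_{\th'}-g_\th$: for $\th'$ close enough to $\th$ there is $\x_{\th'}\in\Mm$, close to $\x_\th$, that is critical for $\ener(\cdot,\th')$ and satisfies $\frac{1}{\mu'}(\u'-\Corr'\x_{\th'})\in\ri(\partial J(\x_{\th'}))$. To upgrade this to uniqueness, I would note that, since $\x_{\th'}\in\Mm$ is near $\x_\th$ and $\partial J$ is continuous relative to $\Mm$ (property~\eqref{PS-DiffCont}), the subspace $T_{\x_{\th'}}=\tgtManif{\x_{\th'}}{\Mm}$ is close to $T_{\x_\th}$, so together with $\Corr'$ close to $\Corr$ the restricted injectivity persists, $\ker(\Corr')\cap T_{\x_{\th'}}=\{0\}$. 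Then for any other solution $\x$ at $\th'$ the value of $\Corr'\x$ is the same at all solutions (classical), so the certificate of $\x$ is also $\frac1{\mu'}(\u'-\Corr'\x_{\th'})\in\ri(\partial J(\x_{\th'}))\cap\partial J(\x)$, which by the standard relative-interior argument forces $\x-\x_{\th'}\in T_{\x_{\th'}}$; hence $\x-\x_{\th'}\in\ker(\Corr')\cap T_{\x_{\th'}}=\{0\}$. (Taking $\th'=\th$ shows, as a byproduct, that the hypotheses already make $\x_\th$ the unique solution of $(\Pp_\th)$, so the statement is unambiguous.)

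The step I expect to be the main obstacle is the second-order sufficient condition: although $J$ is convex on $\RR^\N$, its restriction to the possibly curved manifold $\Mm$ need not be geodesically convex, so the Riemannian Hessian of $\ener(\cdot,\th)|_\Mm$ cannot be bounded term by term; it is the \emph{global} convexity inequality for $J$ used \emph{along} the curve $\gamma$ that turns $\ker(\Corr)\cap T_{\x_\th}=\{0\}$ into positive definiteness of the restricted Hessian, and one must check that the curvature contributions of $J$ and of the quadratic term drop out at the critical point. A secondary point is the persistence of $\ker(\Corr')\cap T_{\x_{\th'}}=\{0\}$, for which continuity of $T_{\x}$ along $\Mm$, i.e.\ property~\eqref{PS-DiffCont}, is essential: it is immediate for the linear-manifold examples ($\ell^1$, group sparsity, analysis priors) but is genuinely used for curved manifolds such as the one attached to the nuclear norm.
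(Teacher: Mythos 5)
Your proposal is correct and follows essentially the same route as the paper, which simply invokes the sensitivity theorem for strong minimizers of partly smooth functions (\cite[Theorem~5.7]{Lewis-PartlySmooth}) after noting, via the smooth perturbation rule, that $\ener$ is partly smooth at $(\x_\th,\th)$ relative to $\Mm\times\Theta$ and that condition~\eqref{eq-cond-sensi-lagr} is equivalent to $\x_\th$ being a nondegenerate strong minimizer. You merely make explicit what the paper delegates to the citation — the verification of the second-order growth condition via the subgradient inequality and restricted injectivity, and the promotion of the local conclusion to global uniqueness by convexity (the paper's Proposition~\ref{prop:uniqueness}) — both of which are carried out correctly.
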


\begin{proof}
	This is an straightforward application of~\cite[Theorem 5.7]{Lewis-PartlySmooth}. Indeed, by the smooth perturbation rule \cite[Corollary~4.7]{Lewis-PartlySmooth}, the function $\ener$ is partly smooth at $(\x_\th,\th)$ relative to the manifold $\Mm \times \Theta$, and condition~\eqref{eq-cond-sensi-lagr} is exactly equivalent to $\x_\th$ being a strong minimizer of $\ener(\cdot,\th)$, see \cite[Definition~5.6]{Lewis-PartlySmooth}.   
\end{proof}

Condition~\eqref{eq-cond-sensi-lagr} is not very useful because it depends on the solution $\x_\th$ and not on the data to recover $\x_0$. The rationale behind Theorem~\ref{thm-stability} is to make $\th$ tends to $0$, and under the hypotheses of Theorem~\ref{thm-stability}, to obtain 
\eq{
	\x_\th \rightarrow \x_0
	\qandq
	\frac{ \u - \Corr \x_\th }{\mu} \rightarrow \etaL{\CorrL}.
} 
This is precisely what we need to prove to make the statement of the theorem correct. 



}

\newcommand{\xk}{\x_k}
\newcommand{\xt}{\tilde\x}
\newcommand{\Tk}{{T_k}}
\newcommand{\Corrk}{\Corr_k}

\subsection{Proof of Theorem~\ref{thm-stability}}
\label{sec-proof-thm-stability}

In order to prove Theorem~\ref{thm-stability}, we consider any sequence $\th_k = (\mu_k,u_k=\Corrk x_0 + \epsilon_k,\Corrk)_{k}$ where $\Phi_k \in \RR^{\P_k \times \N}$.
Assume that 
\eql{\label{eq-const-conv-proof}
	\pa{
		\Corrk, 
      	\epsilon_k \, \mu_k^{-1}, 
      	\mu_k
  	} 
	\longrightarrow 
	(\CorrL,0,0)~.
}
Then proving Theorem~\ref{thm-stability} boils down to showing that for $k$ large enough, the solution $\xk$ of $(\Pp_{\th_k})$  is unique and satisfies $\xk \in \Mm$.

\paragraph{Constrained problem.}

We consider the following non-smooth,in general non-convex, constrained minimization problem
\eql{\label{eq-noncvx}
	\xk \in  
	\uArgmin{\x \in \Mm \cap \Kk} \ener(\x,\th_k)
}
where $\Kk$ is an arbitrary fixed convex compact neighbourhood of $\x_0$. 


The following lemma first show the convergence of $\x_k$.

\begin{lem}\label{prop-conv-primal}
	Under condition~\eqref{eq-const-conv-proof}, 
	$\x_k \rightarrow \x_0$.
\end{lem}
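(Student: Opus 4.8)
The plan is to prove Lemma~\ref{prop-conv-primal} via a standard compactness plus lower-semicontinuity (Gamma-convergence-type) argument, exploiting the fact that $\x_k$ lives in the fixed compact neighbourhood $\Kk$, so the sequence $(\x_k)_k$ is automatically bounded and admits convergent subsequences. It therefore suffices to show that every cluster point $\bar\x$ of $(\x_k)_k$ equals $\x_0$; since $\x_0$ is the unique solution of $(\Pp_{\th_0})$ (see the discussion around~\eqref{eq-source-condition}: $\etaL{\CorrL}\in\ri(\partial J(\x_0))$ implies $\x_0$ is the minimal-norm solution and in fact the unique solution of the constrained problem restricted to $\Mm\cap\Kk$), this pins down the limit.

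First I would fix a subsequence (not relabelled) with $\x_k \to \bar\x$; since $\Mm$ is closed locally around $\x_0$ and $\Kk$ is closed, $\bar\x \in \Mm\cap\Kk$ (here using that $\Kk$ is a small enough neighbourhood so that $\Mm\cap\Kk$ is closed). Next I would pass to the limit in the energy. The key estimate is an upper bound: testing the minimality of $\x_k$ in~\eqref{eq-noncvx} against the fixed competitor $\x_0 \in \Mm\cap\Kk$ gives $\ener(\x_k,\th_k)\le \ener(\x_0,\th_k)$. Expanding $\ener(\cdot,\th_k)$ using its explicit quadratic-plus-$J$ form and the identity $\epsilon_k = u_k - \Corrk\x_0$, one checks that $\ener(\x_0,\th_k) = J(\x_0) + O(\norm{\epsilon_k}^2\mu_k^{-1})$, which tends to $J(\x_0)$ because $\norm{\epsilon_k}\mu_k^{-1}\to 0$ and $\norm{\epsilon_k}\to 0$ (the latter since $\mu_k\to 0$). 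On the other side, rewriting the quadratic part of $\ener(\x_k,\th_k)$ as $\frac{1}{2\mu_k}\dotp{\Corrk(\x_k-\x_0)}{\x_k-\x_0} - \frac{1}{\mu_k}\dotp{\x_k-\x_0}{\epsilon_k} + (\text{const in }\x_k)$, and completing the square, shows this quadratic part is bounded below by roughly $-\frac{1}{2\mu_k}\dotp{\Corrk^+\epsilon_k}{\epsilon_k}$, which is again $o(1)$; combined with $J\ge 0$ and the upper bound, we get that $J(\x_k)$ stays bounded and that $\frac{1}{2\mu_k}\dotp{\Corrk(\x_k-\x_0)}{\x_k-\x_0}$ is bounded. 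Letting $k\to\infty$ and using continuity of $J$ together with $\mu_k\to 0$ forces $\dotp{\CorrL(\bar\x-\x_0)}{\bar\x-\x_0} = 0$, i.e. $\bar\x - \x_0 \in \ker(\CorrL)$, hence $\Corr\bar\x = \Corr\x_0 = u$ in the limit, so $\bar\x \in \Hh_u$ and $\bar\x$ is feasible for $(\Pp_{\th_0})$.

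Finally I would use lower semicontinuity of $J$ (continuity, actually, since $J$ is finite-valued convex) to pass to the limit in $J(\x_k)\le J(\x_0) + o(1)$, obtaining $J(\bar\x)\le J(\x_0)$. Since $\bar\x$ is feasible for the constrained problem $(\Pp_{\th_0})$ and lies in $\Mm\cap\Kk$, and $\x_0$ is its unique minimizer, we conclude $\bar\x = \x_0$. As the limit of every convergent subsequence is the same point $\x_0$ and the whole sequence lives in the compact set $\Kk$, the full sequence converges: $\x_k\to\x_0$.

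The main obstacle I anticipate is the careful bookkeeping of the quadratic terms when $\mu_k\to 0$: one must extract from the inequality $\ener(\x_k,\th_k)\le\ener(\x_0,\th_k)$ both that $J(\x_k)$ stays bounded (so $\bar\x$ inherits finite energy — automatic here since $J$ is finite-valued, but the bound is still needed quantitatively) and that the rescaled quadratic form $\mu_k^{-1}\dotp{\Corrk(\x_k-\x_0)}{\x_k-\x_0}$ is controlled, all while the cross term $\mu_k^{-1}\dotp{\x_k-\x_0}{\epsilon_k}$ could a priori be large. The resolution is that $\norm{\epsilon_k}\mu_k^{-1}\to0$ together with boundedness of $\x_k-\x_0$ on $\Kk$ kills the cross term, and a Cauchy–Schwarz/Moore–Penrose argument ($\epsilon_k\in\Im\Corrk$) controls the leftover; this is where condition~\eqref{eq-const-conv-proof} is used in full. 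A secondary subtlety is justifying that $\x_0$ is the \emph{unique} minimizer of the constrained problem over $\Mm\cap\Kk$ rather than over all of $\RR^\N$, but this follows because $\x_0$ is assumed to be the unique global solution of $(\Pp_{\th_0})$ and $\Kk$ is a neighbourhood of it, so local and global minimizers over $\Mm\cap\Kk$ coincide with $\x_0$.
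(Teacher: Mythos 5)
Your argument is correct and is essentially the paper's own proof: test the optimality of $\x_k$ in \eqref{eq-noncvx} against the competitor $\x_0$, use $\norm{\epsilon_k}\mu_k^{-1}\to 0$ and boundedness on $\Kk$ to kill the cross term, deduce that any cluster point $\bar\x$ satisfies $\CorrL\bar\x=\CorrL\x_0$ and $J(\bar\x)\le J(\x_0)$, and conclude by uniqueness of the solution of the limit constrained problem. The only imprecision is the intermediate claim $\ener(\x_0,\th_k)=J(\x_0)+O(\norm{\epsilon_k}^2\mu_k^{-1})$, which by itself would require $\norm{\Corrk^+}$ bounded (not guaranteed since $\CorrL$ may be singular); this is harmless because, as you note, the term $\frac{1}{2\mu_k}\dotp{\Corrk^+\epsilon_k}{\epsilon_k}$ cancels identically when the two energies are subtracted --- which is exactly how the paper writes the inequality \eqref{eq-proof-Gamma-cv-1} --- so it never needs to be estimated.
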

\begin{proof}
	We denote $\norm{u}_{\Corr}^2 = \dotp{\Corr u}{u}$ for any non-negative definite matrix $\Gamma$. 
	We first note that~\eqref{eq-hyp-etaF} implies that $\x_0$ is the unique solution of~$(\Pp_{0,\CorrL \x_0,\CorrL})$. 
	\if 0  
	One also note that~\eqref{eq-hyp-etaF} implies that $\Ii_{\CorrL}$ hold, where we introduced the following condition
	\eql{\label{eq-kerPhi-kerJ}\tag{$\Ii_\Corr$}
		\ker(\Corr) \cap \ker(J_\infty) = \{0\}
		\qwhereq
		J_\infty(x) = \lim_{t \rightarrow +\infty} \frac{J(tx)}{t}
	}
	which is equivalent to requiring that $\dotp{\Corr \cdot}{\cdot} + J$ is coercive and that both problems~\eqref{eq-regularization-lagrangian} and~\eqref{eq-regularization-noiseless} have non-empty bounded sets of solutions.
	\fi 
	By optimality of $\x_{k}$ one has $E(\x_k,\th_k) \leq E(\x_0,\th_k)$ and hence
	\begin{align*}
		\frac{1}{2}\norm{\x_k}_{\Corr_k} - \dotp{\x_k}{\Corr_k \x_0+\epsilon_k} + \mu_k J(\x_k) 
		\leq \frac{1}{2}\norm{\x_0}_{\Corr_k} - \dotp{\x_0}{\Corr_k \x_0+\epsilon_k} + \mu_k J(\x_0)	
	\end{align*} 
	which is equivalently stated as
	\eql{\label{eq-proof-Gamma-cv-1}
		\frac{1}{2}\norm{\x_k-\x_0}_{\Corr_k}^2 - \dotp{\x_k-\x_0}{\epsilon_k} + \mu_k J(\x_k) 
		\leq
		\mu_k J(\x_0).		
	}
	Since $\x_k \in \Kk$, the sequence $(x_k)_k$ is bounded, and we let $\x^\star$ be any accumulation point. 
	Taking the limit $k \rightarrow +\infty$ in~\eqref{eq-proof-Gamma-cv-1} and using~\eqref{eq-const-conv-proof} and continuity of the inner product shows that $\CorrL \x^\star = \CorrL \x_0$.
	Furthermore, since $\frac{1}{2}\norm{\x_k-\x_0}_{\Corr_k}^2 \geq 0$, \eqref{eq-proof-Gamma-cv-1} yields
	$	- \dotp{\x_k-\x_0}{\frac{\epsilon_k}{\mu_k}} +  J(\x_k) 
		\leq
		 J(\x_0).$		
	Taking the limit $k \rightarrow +\infty$ shows that $J(\x^\star) \leq J(\x_0)$. 
	Combining this with the previous claim that $\x^\star$ is a feasible point of $(\Pp_{0,\CorrL x_0,\CorrL})$ allows to conclude that $\x^\star$ is a solution of~$(\Pp_{0,\CorrL x_0,\CorrL})$. Since $\x_0$ is unique, this leads to $\x^\star = \x_0$. 
\end{proof}

We now aim at showing that for $k$ large enough, $\xk$ is the unique solution of $(\Pp_{\th_k})$. In order to do so, we make use of the following classical result, whose proof can be found for instance in~\cite{vaiter-analysis}. 

\begin{prop}\label{prop:uniqueness}
  	Let $\x \in \RR^\N$ such that $\frac{u - \Corr \x}{\mu} \in \ri( \partial J(\x) )$ and $\ker(\Corr) \cap T_{\x} = \{0\}$.
  	Then $\x$ is the unique solution of~\eqref{eq-regularization-lagrangian}.
\end{prop}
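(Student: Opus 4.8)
The plan is to use standard convex duality / optimality conditions for the problem~\eqref{eq-regularization-lagrangian}. First I would write down the first-order optimality condition: since the objective $\ener(\cdot,\th)$ is convex and the smooth quadratic part is differentiable, $\x$ is a minimizer of~\eqref{eq-regularization-lagrangian} if and only if $0 \in \partial \ener(\x,\th)$, i.e. $\frac{1}{\mu}(\Corr\x - u) \in -\partial J(\x)$, equivalently $\eta := \frac{u - \Corr\x}{\mu} \in \partial J(\x)$. The hypothesis $\eta \in \ri(\partial J(\x))$ is in particular a valid subgradient, so $\x$ is \emph{a} solution; the work is to upgrade this to \emph{the} solution.

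Next I would take any other solution $\x'$ of~\eqref{eq-regularization-lagrangian}. By convexity of the objective, the whole segment $[\x,\x']$ consists of minimizers, and the quadratic term $\frac{1}{2\mu}\dotp{\Corr(\cdot)}{\cdot} - \frac1\mu\dotp{\cdot}{u}$ must be affine (indeed constant in the minimizing direction) along this segment, which forces $\Corr(\x'-\x) = 0$, i.e. $\x' - \x \in \ker(\Corr)$. Separately, from the optimality condition at $\x'$ we get $\eta' := \frac{u-\Corr\x'}{\mu} = \frac{u-\Corr\x}{\mu} = \eta$ (using $\Corr\x'=\Corr\x$), so $\eta \in \partial J(\x')$ as well. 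Now I invoke the standard fact about subdifferentials: if a single vector $\eta$ lies in $\partial J(\x) \cap \partial J(\x')$ with $\eta \in \ri(\partial J(\x))$, then $\x' - \x \in (\VecHull(\partial J(\x)))^\perp = T_{\x}$. (This is the key geometric lemma: $\eta \in \partial J(\x)$ and $\eta \in \partial J(\x')$ give, by monotonicity of $\partial J$ together with $\eta \in \ri \partial J(\x)$, that the directional derivative of $J$ at $\x$ in direction $\x'-\x$ and in direction $-(\x'-\x)$ are both controlled so that $\x'-\x$ is orthogonal to the affine hull of $\partial J(\x)$.) Combining $\x'-\x \in \ker(\Corr)$ and $\x'-\x \in T_{\x}$ with the hypothesis $\ker(\Corr)\cap T_{\x} = \{0\}$ yields $\x' = \x$.

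The main obstacle is the geometric subdifferential lemma in the step above — namely, showing that sharing a relative-interior subgradient forces $\x'-\x \in T_{\x}$. I would establish it as follows: for any $\zeta \in \partial J(\x)$, since $\eta \in \ri(\partial J(\x))$ there is $t>0$ with $\eta + t(\eta - \zeta) \in \partial J(\x)$; pairing the monotonicity inequality $\dotp{(\eta+t(\eta-\zeta)) - \eta}{\x' - \x} \geq $ (nonneg.\ quantity coming from $\eta \in \partial J(\x')$ and the reversed subgradient inequality) eventually gives $\dotp{\eta-\zeta}{\x'-\x} = 0$ for all $\zeta$, hence $\x'-\x \perp \VecHull(\partial J(\x)) $, i.e. $\x' - \x \in T_{\x}$. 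Since the proposition is quoted from~\cite{vaiter-analysis} as a known result, I would in the interest of brevity cite that reference for the detailed argument and only sketch the chain $\Corr(\x'-\x)=0$, $\x'-\x\in T_{\x}$, $\ker(\Corr)\cap T_{\x}=\{0\}$, concluding uniqueness.
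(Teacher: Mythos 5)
Your proposal is correct. Note that the paper does not actually prove Proposition~\ref{prop:uniqueness}; it only cites \cite{vaiter-analysis}, and your argument is precisely the standard one found there: the first-order condition shows $\x$ is a minimizer; convexity of the objective plus positive semi-definiteness of $\Corr$ forces any other minimizer $\x'$ to satisfy $\Corr(\x'-\x)=0$, hence to share the same subgradient $\eta = \frac{u-\Corr\x}{\mu} \in \partial J(\x)\cap\partial J(\x')$; and the relative-interior condition then yields $\dotp{\eta-\zeta}{\x'-\x}=0$ for every $\zeta\in\partial J(\x)$ (combine the two subgradient inequalities to get $\dotp{\eta-\zeta}{\x'-\x}\geq 0$, then replace $\zeta$ by $\eta+t(\eta-\zeta)\in\partial J(\x)$ for small $t>0$ to reverse the sign), so $\x'-\x\in T_{\x}$ and the restricted injectivity assumption concludes. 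The sketch of the geometric lemma is complete enough to be checkable, so there is no gap.
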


\paragraph{Convergence of the tangent model subspace.}

By definition of the constrained problem~\eqref{eq-noncvx}, $\xk \in \Mm$.
Moreover, since $\ener(\cdot,\theta_k)$ is partly smooth at $\x_0$ relative to $\Mm$, the sharpness property Definition~\ref{dfn-partly-smooth}\eqref{PS-Sharp} holds at all nearby points in the manifold $\Mm$ \cite[Proposition~2.10]{Lewis-PartlySmooth}. Thus as soon as $k$ is large enough, we have $\Tk = \tgtManif{\xk}{\Mm}$. Using the fact that $\Mm$ is of class $\Cdeux$, we get
\eql{\label{eq-conv-T}
	\Tk = \tgtManif{\xk}{\Mm}
	\longrightarrow
	\tgtManif{\x_0}{\Mm} = T
}
when~\eqref{eq-const-conv-proof} holds, 
where the convergence should be understood over the Grassmannian of linear subspaces with the same dimension (or equivalently, as the convergence of the projection operators $P_{\Tk} \rightarrow P_{T}$). Since $\ker(\CorrL) \cap T = \{0\}$, \eqref{eq-conv-T} implies that for $k$ large enough, when~\eqref{eq-const-conv-proof} holds, 
\eql{\label{eq-condition-inj-proof-1}
	\ker(\Corrk) \cap \Tk = \{0\},
} 
which we assume from now on.

\paragraph{First order condition.}

Let's take $\Kk=\mathbb{B}_r(\x_0)$ for $r$ sufficiently large. For any $\delta > 0$, $\exists K_\delta > 0$ such that $\forall k > K_\delta$, $\xk \in \mathbb{B}_\delta(\x_0)$. Thus, for $k$ large enough, i.e. $\delta$ sufficiently small, we indeed have $\xk \in \Int(\Kk)$.
Furthermore, it is easy to see that $\iota_{\Kk}$ is locally partly smooth at $\x_0$ relative to $\Kk$. Since is $J$ is also locally partly smooth at $\x_0$ relative to $\Mm$, the sum rule \cite[Corollary~4.6]{Lewis-PartlySmooth} shows that, for all sufficiently large $k$, when~\eqref{eq-const-conv-proof} holds and $\xk \in \Int(\Kk)$, $J+\iota_\Kk$ is locally partly smooth at $\xk$ relative to $\Mm \cap \Kk$, and then so is $\ener(\cdot,\theta_k) + \iota_{\Kk}$ by the smooth perturbation rule \cite[Corollary~4.7]{Lewis-PartlySmooth}. Therefore, \cite[Proposition 2.4(a)-(b)]{Lewis-PartlySmooth} applies, and it follows that $\xk$ is a critical point of \eqref{eq-noncvx} if, and only if,
\begin{align*}
    0 	\in \aff(\partial \ener(\xk,\theta_k) + N_{\Kk}(\xk))
    = \frac{\Corrk \xk - u_k}{\mu_k}  + \aff(\partial J(\xk)) 
    = \frac{\Corrk \xk - u_k}{\mu_k}  + e_{\xk} + \Tk^\perp .
\end{align*}
The first equality comes from the fact that $\ener(\cdot,\theta)$ is a closed convex function, and that the normal cone of $\Kk$ at $\xk$ vanishes on the interior points of $\Kk$, and the second one from the decomposability of the subdifferential.
Projecting this relation onto $\Tk$, we get, since $e_{\xk} \in \Tk$, 
\eql{\label{eq:firstordernoncvx}
	P_{\Tk} ( \Corrk \xk - u_k )  + \mu_k e_{\xk} = 0.
}

\paragraph{Convergence of the primal variables.}

Since both $\xk$ and $\x_0$ belong to $\Mm$, and partial smoothness implies that $\Mm$ is a manifold of class $\Cdeux$ around each of them, we deduce that each point in their respective neighbourhoods has a unique projection on $\Mm$~\cite{PoliquinRockafellar2000}.
In particular, $\xk=P_{\Mm}(\xk)$ and $\x_0=P_{\Mm}(\x_0)$. Moreover, $P_{\Mm}$ is of class $C^1$ near $\xk$ \cite[Lemma~4]{LewisMalick08}. Thus, $C^2$ differentiability shows that
\eq{
	\xk - \x_0 = P_{\Mm}(\xk) - P_{\Mm}(\x_0) = \mathrm{D} P_{\Mm}(\xk)(\xk - \x_0) + R(\xk)
}
where $R(\xk) = O( \norm{\xk-\x_0}^2 )$ and 
where $\mathrm{D} P_{\Mm}(\xk)$ is the derivative of $P_{\Mm}$ at $\xk$. Using \cite[Lemma~4]{LewisMalick08}, and recalling that $\Tk=\tgtManif{\xk}{\Mm}$ by the sharpness property, the derivative $\mathrm{D} P_{\Mm}(\xk)$ is given by 
$\mathrm{D} P_{\Mm}(\xk) = P_{\Tk}$.
Inserting this in  \eqref{eq:firstordernoncvx}, we get
\eq{
	P_{\Tk} \Corrk \pa{ P_{\Tk}(\xk - \x_0) + R(\xk) } - P_{\Tk} \epsilon_k  + \mu_k e_{\xk} = 0.
}
Using~\eqref{eq-condition-inj-proof-1}, $\Corr_{k,\Tk}$ has full rank, and thus
\eql{\label{eq:implicitprimal}
	\xk - \x_0 = \Corr_{k,\Tk}^+ \pa{
		\epsilon_k - \mu_k e_{\xk} - \Corrk R(\xk)
	},
}
where we also used that $\Tk^\perp \subset \ker(\Corr_{k,\Tk}^+)$. One has $\Corr_{k,\Tk}^{+} \rightarrow \CorrL^{+}$ so that $\Corr_{k,\Tk}^+ \Corrk = O(1)$ and  $\Corr_{k,\Tk}^{+}  = O(1)$.
Altogether, we thus obtain the bound
\eql{\label{eq-conv-txt-x0}
	\norm{ \xk - \x_0 } =  O\pa{ \norm{\epsilon_k}, \mu_k }.
}

\paragraph{Convergence of the dual variables.}

\newcommand{\etak}{\eta_k} 
\newcommand{\pk}{\p_k}

We define $\etak = \frac{u_k - \Corrk \xk}{\mu_k}$.
Arguing as above, and using \eqref{eq:implicitprimal} we have
\begin{align*}
	\mu_k \etak 	&= \epsilon_k + \Corrk (\x_0-\xk) 
			= \epsilon_k - \Corrk \Corr_{k,\Tk}^+ 
				\pa{ \epsilon_k - \mu_k e_{\xk} - \Corrk R(\xk) } \\
			&= \epsilon_k - \Corrk P_\Tk \Corr_{k,\Tk}^+ 
				\pa{ \epsilon_k - \mu_k e_{\xk} - \Corrk R(\xk) } \\
			&= P_{V_\Tk^\bot} \epsilon_k + P_{V_\Tk} \Corrk R(\xk) + \mu_k \Corrk \Corr_{k,\Tk}^+ e_{\xk}, 
\end{align*}
where we denoted $V_{\Tk} = \Im( \Corrk P_{\Tk} )$, and used that $\Im(\Corr_{k,\Tk}^+) \subset \Tk$.
We thus arrive at
\eq{
      \norm{ \etak - \etaL{\CorrL} } = 
      O\pa{
	      {\norm{\epsilon_k}}{\mu_k^{-1}},
	      \norm{ \Corrk \Corr_{k,\Tk}^+ e_{\xk} - \etaL{\CorrL} }, 
	      \norm{\Corrk} {\norm{\xk-\x_0}^2}{\mu_k^{-1}}
      }.      
}

Since $\Mm$ is a $C^2$ manifold, and by partial smoothness ($J$ is $C^2$ on $\Mm$), we have $\x \mapsto e_\x$ is $C^1$ on $\Mm$, one has
\eql{\label{eq-proof-tmp-1}
	\norm{ e_{\xk} - e } = O( \norm{\xk - \x_0} ).
}

Using the triangle inequality, we get
\eq{
	\norm{ \Corrk \Corr_{k,\Tk}^+  - \CorrL \CorrL_T^{+} } 
	\leq
	\norm{ \Corr_{k,\Tk}^+ } \norm{ \Corr_{k} - \CorrL }
	+ 
	\norm{ \CorrL } \norm{ \Corr_{k,\Tk}^+ - \CorrL_T^{+} }.
}
Again, since $\Corr_{k,\Tk}^+ \rightarrow \CorrL_{T}^+$, we have $\norm{ \Corr_{k,\Tk}^+ } = O(1)$. 
Moreover, $A \mapsto A^{+}$ is smooth at $A = \Corr_{T}$ along the manifold of matrices of constant rank, and $\Mm$ is a $C^2$ manifold near $\x_0$. Thus
\eq{
	\norm{ \Corr_{k,\Tk}^+ - \CorrL_T^{+} } = O( \norm{ \Corr_{k,\Tk} - \CorrL_T }  )
	= O( \norm{ \Corr_{k} - \CorrL },  \norm{P_{\Tk} - P_T} )
	= O( \norm{ \Corr_{k} - \CorrL }, \norm{\xk-\x_0}).
}
This shows that
\eql{\label{eq-proof-tmp-2}	
	\norm{ \Corrk \Corr_{k,\Tk}^+  - \CorrL \CorrL_T^{+} }=
	O( \norm{\Corrk-\CorrL}, \norm{\xk - \x_0} ).
}
Putting~\eqref{eq-proof-tmp-1} and \eqref{eq-proof-tmp-2} together implies
	$\norm{ \Corrk \Corr_{k,\Tk}^+ e_{\xk} - \etaL{\CorrL} }
	O( \norm{\Corrk-\CorrL}, \norm{\xk - \x_0} )$.
Altogether, we get the bound
\eq{\label{eq-conv-teta-Phi}
	\norm{\etak - \etaL{\CorrL}} =    
	O\pa{
	      {\norm{\epsilon_k}}{\mu_k^{-1}},
	      \norm{\xk-\x_0}, \norm{\Corrk-\CorrL},  
	      \norm{\Corrk} {\norm{\xk-\x_0}^2}{\mu_k^{-1}}
      }.
}
Since $\norm{\xk-\x_0}$ is bounded according to~\eqref{eq-conv-txt-x0}, we arrive at
\eql{\label{eq-conv-teta}
	\norm{\etak - \etaL{\CorrL}} =    
	O\pa{
		\norm{\Corrk-\CorrL}, 
	      { \norm{\epsilon_k} }{ \mu_k^{-1} }, 
	      \mu_k
      }.
}

\paragraph{Convergence inside the relative interior.}

Using the hypothesis that $\etaL{\CorrL} \in \ri(\partial J(\x_0))$, we will show that for $k$ large enough,  
\eql{\label{eq-condition-ri-proof-1}
	\etak \in \ri(\partial J(\xk)).
} 
Let us suppose this does not hold. Then there exists a sub-sequence of $\etak$, that we do not relabel for the sake of readability of the proof, such that 
\eql{\label{eq-etan-rbound}
	\etak \in \text{rbound}(\partial J(\xk)) ~.
}
According to~\eqref{eq-conv-teta} and Lemma~\ref{prop-conv-primal}, under \eqref{eq-const-conv-proof},
	$(\xk,\etak) \rightarrow (\x_0,\etaL{\CorrL})$.
Condition~\eqref{eq-etan-rbound} is equivalently stated as, for each $k$
\eql{\label{eq-contradict-rbound}
	\exists z_k \in T_{\xk}^\bot, \quad
	\foralls \eta \in \partial J(\xk), \quad
	\dotp{z_k}{\eta-\etak} \geq 0, 
}
where one can impose the normalization $\norm{z_k}=1$ by positive-homogeneity. Up to a sub-sequence (that for simplicity we still denote $z_k$ with a slight abuse of notation), since $z_k$ is in a compact set, we can assume $z_k$ approaches a non-zero cluster point $z^\star$.

Since $T_{\xk}^\bot \rightarrow T^\bot$ because $\Mm$ is a $C^2$ manifold, one has that $z^\star \in T^\bot$. We now show that 
\eql{\label{eq-proof-contradiction}
	\foralls v \in \partial J(\x_0), \quad
	\dotp{z^\star}{\eta-\etaL{\CorrL}} \geq 0.
}
Indeed, let us consider any $v \in \partial J(\x_0)$. In view of the continuity property in Definition~\ref{dfn-partly-smooth}\eqref{PS-DiffCont}
$\partial J$ is continuous at $\x_0$ along $\Mm$, so that since $\x_k \rightarrow \x_0$ there exists $v_k \in \partial J(\xk)$ with $v_k \rightarrow v$. Applying~\eqref{eq-contradict-rbound} with $\eta=v_k$ gives $\dotp{z_k}{v_k-\etak} \geq 0$. 
Taking the limit $k \rightarrow +\infty$ in this inequality leads to~\eqref{eq-proof-contradiction}, which contradicts the fact that $\etaL{\CorrL} \in \ri(\partial J(\x_0))$. 
In view of \eqref{eq-condition-ri-proof-1} and~\eqref{eq-condition-inj-proof-1}, using Proposition~\ref{prop:uniqueness} shows that $\xk$ is the unique solution of~\eqref{eq-regularization-lagrangian}. \qed

\subsection{Proof of Theorem~\ref{thm-consistency}}
\label{sec-proof-thm-consistency}

It is sufficient to check that \eqref{eq-condition-closeness} is in force with probability 1 as $\P \to +\infty$. Owing to classical results on convergence of sample covariances, which apply thanks to the assumption that the fourth order moments are finite, we get
$\Corr_\P - \CorrL = O_P\pa{\P^{-1/2}}$ and $\frac{1}{\P}\dotp{\xi_i}{w} = O_P\pa{\P^{-1/2}}$, 
where we used the assumption that $\EE\pa{\dotp{\xi_i}{w}} = 0$. As $p$ is fixed, it follows that
$\norm{\Corr_\P - \CorrL} = O_P\pa{\P^{-1/2}}$ and $\norm{\epsilon_\P} = O_P\pa{\P^{-1/2}}$.
Thus under the scaling~\eqref{eq-lambda-scaling}, we get
\begin{align*}
\pa{\norm{\Corr_\P - \CorrL}, { \norm{\epsilon_\P} }{\mu_\P^{-1}}, \mu_\P}
	&= \pa{O_P(\P^{-1/2}), \frac{1}{\mu_\P \P^{1/2}}O_P(1), o(1)} \\
	&= \pa{O_P(\P^{-1/2}), o(1)O_P(1), o(1)} 
	= \pa{O_P(\P^{-1/2}), o(1), o(1)} ~,
\end{align*}
which indeed converges to $0$ in probability. This concludes the proof. \qed

\if 0
\begin{rem}
  We summarize the use of partial smoothness in the proof.
  \begin{itemize}
  \item \textbf{Manifold equality and convergence of the tangent model subspaces.}
    We use partial smoothness at $\x_0$ relative to $\Mm$ and local normal sharpness for $(\norm{w}/\lambda,\lambda)$ small enough.
  \textbf{Constrained problem.}
    We use local partial smoothness at $\x_0$, hence partial smoothness at $\xk$, relative to $\Mm$.
  \item \textbf{First order condition.}
    We use local partial smoothness at $\x_0$, hence partial smoothness at $\xk$, relative to $\Mm$.
  \item \textbf{Convergence of the primal variables.}
    We use local partial smoothness at $\x_0$, hence partial smoothness at $\xk$, relative to $\Mm$. In particular the fact that $\Mm$ is $\Cdeux$ around $\xk$ and $\x_0$.
  \item \textbf{Convergence of the dual variables.}
    We use partial smoothness at $\xk$ relative to $\Mm$, and the partial smoothness at $\x_0$ relative to $\Mm$ for the continuity of the mapping $e$.
  \item \textbf{Convergence inside the relative interior.}
    We use the partial smoothness at $\x_0$ relative to $\Mm$.
  \item The subdifferential continuity property in partial smoothness is only used at $\x_0$. 
  \end{itemize}
\end{rem}
\fi

\subsection{Proof of Proposition~\ref{prop-instability}}
\label{sec-proof-prop-instability}

Let $\xk$ be a solution of $(\Pp_{\th_k})$.
Suppose that $\xk \in \Mm$.
In particular, $\xk$ is a solution of the non-convex minimization~\eqref{eq-noncvx}. Arguing as in the proof of Theorem~\ref{thm-stability}, we get the bound~\eqref{eq-conv-teta}, i.e.
\begin{equation}\label{eq-conv-teta-bis}
  	\norm{\eta_k - \etaL{\CorrL}} = O(\norm{\Corr_k-\CorrL}, \norm{\epsilon_k}/\mu_k, \mu_k)
  	\qwhereq
  	\eta_k = \frac{\u_k - \Corr_k \xk}{\mu_k} .
\end{equation}
In particular, $\norm{\eta_k - \etaL{\CorrL}} \to 0$.
Defining $K = d(\etaL{\CorrL},\partial J(\x))$, one has $K>0$ since $\etaL{\CorrL} \not\in \ri \partial J(\x_0)$.
Choosing $k$ large enough, the convergence of $\eta_k$ to $\etaL{\CorrL}$ implies that
\begin{equation}\label{eq-conv-sup-qt}
  d(\eta_k,\partial J(\x_0)) > K/2
\end{equation}
where $2$ can be changed to any arbitrary value.
Using the outer semi-continuity of the subdifferential, we get that
\begin{equation*}
  \forall \epsilon, \exists k_0, \forall k \geq k_0, \quad
  \partial J(x_k) \subseteq \partial J(\x_0) + B(0,\epsilon) .
\end{equation*}
In particular, $\eta_k \in \partial J(\x_0) + B(0,\epsilon)$ which implies that $d(\eta_k,\partial J(\x_0)) \leq \epsilon$, which is a contradiction to~\eqref{eq-conv-sup-qt}.
Hence, $\x_k \not\in \Mm$.


\section{Conclusion}

In this paper, we provided a unified analysis of the recovery performance when partly smooth functions are used to regularize linear inverse problems. This class of functions encompass all popular regularizers used in the literature. A distinctive feature of our work is that we provided for the first time a unified analysis together with a generalized ``irrepresentable condition'' to guarantee stable and correct identification of the low-complexity manifold underlying the original object. 
%
\OnlyLong{Our work also shows that model consistency is not only of theoretical interest, but also has practical implications because it can be observed after a finite number of iterations of a proximal splitting scheme (here the Forward-Backward). This could also be useful to speedup existing optimization methods by switching to a higher order optimization scheme exploiting the smoothness of the objective function along the smooth model manifold. }


\ifnipsfinal
\section*{Acknowledgements} 

The authors would like to thank Vincent Duval and J\'er\^ome Malick for fruitful discussions. This work has been supported by the European Research Council (ERC project SIGMA-Vision).
\fi

\small{
\bibliographystyle{plain}
\bibliography{bibliography}
}

\end{document}